\documentclass[preprint,review,12pt]{elsarticle}
\usepackage{amssymb}
\usepackage[leqno]{amsmath}
\usepackage{amsfonts}
\usepackage{amsthm}
\usepackage{cases}
\usepackage{graphicx}
\usepackage{subfigure}
\usepackage{epstopdf}
\usepackage{epsfig}
\usepackage{color}

\usepackage{mathrsfs}
\usepackage{mathabx}

 \usepackage{diagbox}

\numberwithin{equation}{section}

\makeatletter
\def\fnum@figure{Fig.\thefigure}
\makeatother

\newtheorem*{assumption*}{Assumption}

\newtheorem{assumption}{Assumption}

\newtheorem{lemma}{Lemma}
\newtheorem{remark}{Remark}
\newtheorem{theorem}{Theorem}

\topmargin=-1in \hoffset=-1cm \voffset=2cm \textheight=210mm
\textwidth=160mm

\let \mr=\mathrm
\let \b=\boldsymbol

\begin{document}

\begin{frontmatter}
\title{Convergence and supercloseness in a balanced norm  of finite
element methods on Bakhvalov-type  meshes for
reaction-diffusion problems\tnoteref{funding}}
\tnotetext[funding]{This research is partially supported by National Natural Science Foundation of China (11771257,11601251)}

\author[label1] {Jin Zhang\corref{cor1}}
\author[label2] {Xiaowei Liu \fnref{cor2}}
\cortext[cor1] {Corresponding author: jinzhangalex@hotmail.com }
\fntext[cor2] {Email: xwliuvivi@hotmail.com }
\address[label1]{School of Mathematics and Statistics, Shandong Normal University,
Jinan 250014, China}
\address[label2]{School of Mathematics and Statistics, Qilu University of Technology (Shandong Academy of Sciences), Jinan 250353, China}

\begin{abstract}
In convergence analysis of finite element methods for singularly perturbed reaction--diffusion problems,
balanced norms have been successfully introduced to replace standard energy norms so that layers can be captured. In this article, we focus on the convergence analysis in a balanced norm  on Bakhvalov-type rectangular meshes. In order to achieve our goal, a novel interpolation operator, which  consists of a local weighted $L^2$ projection operator and the Lagrange interpolation operator, is introduced for a convergence analysis of optimal order in the balanced norm. The analysis  also depends on the stabilities of the $L^2$ projection and the characteristics of Bakhvalov-type meshes. Furthermore,  we obtain a supercloseness result  in the balanced norm, which appears in the literature for the first time. This result depends on another novel interpolant, which consists of the local weighted  $L^2$ projection operator, a  vertices-edges-element operator and some corrections on the boundary. 
\end{abstract}

\begin{keyword}
Singular perturbation\sep Reaction-diffusion equation \sep Bakhvalov-type mesh \sep Finite element method \sep Balanced norm\sep Supercloseness. 
\end{keyword}

\end{frontmatter}

%
%
 
\section{Introduction}
 In this article we consider the  singularly perturbed reaction-diffusion equation 
\begin{equation}\label{eq:reaction diffusion equation}
\begin{split}
-\varepsilon^2\Delta u+bu=&f\quad \text{in $\Omega:=(0,1)^2$},\\
u=&0\quad \text{on $\partial\Omega$},
\end{split}
\end{equation}
where $\varepsilon$ is a   positive parameter.
Assume that $b$ and $f$ are sufficiently smooth and   
$$
b(x,y)\ge 2\beta^2>0 \quad \forall (x,y)\in\bar{\Omega},
$$
with a positive constant $\beta$. Under these conditions on the data in problem \eqref{eq:reaction diffusion equation}, there exists a unique solution  in  $H^1_0(\Omega) \cap H^2(\Omega)$ for all $f\in L^2(\Omega)$. 
The solution to problem \eqref{eq:reaction diffusion equation} typically exhibits boundary  layers of width $\mathcal{O}(\varepsilon |\ln \varepsilon|)$ along all of $\partial\Omega$ in the 
singularly perturbed case $0<\varepsilon\ll 1$ of interest.

For singularly perturbed problems, it is popular to introduce layer-adapted meshes  \cite{Linb:2010-Layer,Roo1Sty2Tob3:2008-Robust,Stynes:2005-Steady}   to fully resolve layers. Then uniform convergence with respect to singular perturbation parameter can be achieved for standard numerical methods. There are two   kinds of layer-adapted meshes  widely used in the literature, which are Bakhvalov-type mesh  (B-mesh) and Shishkin-type mesh (S-mesh) (see \cite{Linb:2010-Layer}). There are a lot of research results on convergence theories of finite element methods on S-meshes; see   \cite{Roo1Sty2Tob3:2008-Robust,Franz:2008-Singularly,Dur1Lom2Pri3:2013-Supercloseness,Zhan1Liu2:2016-Analysis-CL,Zhan1Liu2:2017-Supercloseness-EL,Zhan1Styn2:2017-Supercloseness,Liu1Sty2Zha3:2018-Supercloseness-CIP-EL}
and references therein. 

Although  B-meshes usually have better performances than  S-meshes, there are very few articles on uniform convergence    of finite element methods on the former.
One main reason is that B-meshes have specific transition points between the fine and coarse parts, which are independent of mesh parameter. In the meantime, they bring great difficulties to convergence analysis. For example,  Lagrange interpolant does not work well for B-meshes. Recently, Zhang and Liu \cite{Zhan1Liu2:2020-Optimal,Zhan1Liu2:2021-Supercloseness} proposed an variant of Lagrange interpolant for finite element methods on B-meshes in the case of  convection-diffusion equations and  succeeded to obtain a uniform convergence of optimal order.  

 For reaction-diffusion problems, they also proved  optimal order of uniform convergence in the natural energy norm in \cite{Zhan1Liu2:2020-Convergence}.  However,  energy norm  is not strong enough to capture layers as the singular perturbation parameter tends to zero. 
Thus balanced norms, which are stronger than standard energy norms and characterize layers in an more appropriate way, were introduced in \cite{Lin1Styn2:2012-balanced} for a mixed finite element method and \cite{Roos1Scho2:2015-Convergence} for a finite element method.  
The authors  \cite{Roos1Scho2:2015-Convergence} introduced an $L^2$-projection to   obtain    desired estimations 
 by  $L^{\infty}$-stability of the $L^2$-projection
   on Shishkin mesh. To improve estimations in \cite{Roos1Scho2:2015-Convergence}, the authors  \cite{Fran1Roos2:2019-Error}  introduced a new interpolation, which consists a local weighted $L^2$ projection defined on the uniform part of S-meshes.  Unfortunately,  unlike S-meshes, there is little development on convergence theories in balanced norms on  B-meshes.

In this manuscript we analyze  convergence theories in the balanced norm introduced in \cite{Roos1Scho2:2015-Convergence} 
 for $k$th ($k\ge 1$) order finite element method on Bakhvalov-type rectangular   meshes. For this purpose, we propose a novel interpolant  according to the structures of B-meshes and layer functions. This interpolant consists of a local weighted $L^2$ projection defined on a proper mesh subdomain and the Lagrange interpolant. To prove the convergence of optimal order in the balanced norm, we must take into account the scales of the meshes and different stabilities of the $L^2$ projection.   
The optimal order  convergence  is also supported by our numerical experiments.  Furthermore, we propose another novel interpolant operator, which consists of the local weighted $L^2$ projection operator, a  vertices-edges-element operator and some corrections on the boundary. By careful derivations, we obtain a supercloseness result, which appears in the literature for the first time. Here ``supercloseness''  means that the convergence order  for the error between some interpolation of the solution $u$ and the numerical solution  $u^{N}$ in some norm is greater than the order for $u-u^N$ in the same norm.

The rest of the paper is organized as follows. In  Section~\ref{sec:FE+mesh} we present a priori information of the solution to~\eqref{eq:reaction diffusion equation},  then introduce Bakhvalov-type meshes,  finite element methods and some preliminary results. In Section~\ref{sec:inter+convergence} we give a new interpolant   and prove   uniform convergence of optimal order in the balanced norm. Supercloseness result is given in Section~\ref{sec:supercloseness} by means of another novel interpolant. In Section~\ref{sec:numerical}, numerical results illustrate our theoretical results.

%

Let $D\subset \Omega$. In this article, we will write $(\cdot, \cdot)_{D}$ for the inner product in $L^2(D)$,  $\Vert \cdot \Vert_D$, $\Vert \cdot \Vert_{\infty,D}$, $\Vert \cdot \Vert_{1,D}$ and $\vert \cdot \vert_{1,D}$ for the standard norms in $L^2(D)$, $L^{\infty}(D)$, $L^{1}(D)$ and the standard seminorm in $H^1(D)$, respectively. 
 If $D=\Omega$, the subscript will be omitted from the above norm designations.  
 Throughout the paper, all constants $C$ and $C_i$ are  independent of $\varepsilon$ and~$N$; the constants $C$ are generic while subscripted constants $C_i$ are fixed.

%
%
\section{Finite element method on Bakhvalov-type mesh}\label{sec:FE+mesh}

%
%
%
\subsection{Regularity results}
To construct layer-adapted meshes and analyze uniform convergence, we need a priori information of the solution $u$  to \eqref{eq:reaction diffusion equation}, such as pointwise estimations of the derivatives of the solution, the locations and widths of layers.

For this aim, we give the following assumption on  the solution $u$ to \eqref{eq:reaction diffusion equation}
according to \cite{Han1Kell2:1990-Differentiability,Liu1Mad2Sty3etc:2009-two-scale,Cla1Gra2Ori3:2005-parameter}.
\begin{assumption}\label{assumption-regularity}
The solution $u$ of \eqref{eq:reaction diffusion equation} can be decomposed as
\begin{equation}\label{eq:(2.1a)}
u=v_0+\sum_{i=1}^4w_i+\sum_{i=1}^4z_i \quad \forall \b{x}\in\bar{\Omega},
\end{equation}
where $v_0$ is the regular part, each $w_i$ is a boundary layer function  and each $z_i$ is a corner layer  function. For   $k\in\mathbb{N}$ and $k\ge 1$, there exists a constant $C$ such that 
\begin{align}
&\left|\partial^{m}_{x}\partial^{n}_{y}v_0(x,y) \right|\le C(1+\varepsilon^{(k+1)-m-n})\quad \text{for $0\le m+n\le k+3$},\label{eq:(2.1b)}\\
&\left|\partial^{m}_{x}\partial^{n}_{y}w_1(x,y) \right|\le C(1+\varepsilon^{(k+1)-m})\varepsilon^{-n}e^{-\beta y/\varepsilon}\quad \text{for $0\le m+n\le k+2$},\label{bound-w-1}\\
&\left|\partial^{m}_{x}\partial^{n}_{y}z_1(x,y) \right|\le C\varepsilon^{-(m+n)}e^{-\beta (x+y)/\varepsilon} \quad \text{for $0\le m+n\le k+2$},\label{bound-z-1}
\end{align}
and similarly for the remaining terms. Here denote $\dfrac{\partial^{m+n} v }{ \partial x^m \partial y^n }$ by $\partial^{m}_{x}\partial^{n}_{y}v$  .
\end{assumption}
In the following analysis, we will  denote   $ \sum\limits_{i=1}^4w_i+\sum\limits_{i=1}^4z_i$ by $w$.
 
\subsection{Bakhvalov-type meshes}\label{sec:layer-mesh}
Two  Bakhvalov-type meshes will be discussed. Let $N\in \mathbb{N}$  be divisible by 4. The first  Bakhvalov-type mesh is   introduced in \cite{Roos-2006-Error} and defined by
\begin{equation}\label{eq:Bakhvalov mesh-Roos}
x_i=y_i=\psi(i/N)=
\left\{
\begin{split}
& -\frac{\sigma \varepsilon}{\beta} \ln ( 1-4(1-\varepsilon)i/N ) \quad &&\text{for $i=0,\ldots,N/4$},\\
&d_1 (i/N-1/4)+d_2(i/N-3/4)\quad &&\text{for $i=N/4,\ldots,3N/4$},\\
&1+\frac{\sigma \varepsilon}{\beta} \ln ( 1-4(1-\varepsilon)(1-i/N) ) \quad &&\text{for $i=3N/4,\ldots,N$},
\end{split}
\right.
\end{equation}
where  $\sigma$ will be defined later and $d_1$, $d_2$ are used to ensure the continuity of $\psi(t)$ at $t=1/4$ and $t=3/4$.  The second Bakhvalov-type mesh is introduced in  \cite{Kopteva:1999-the,Kopt1Save2:2011-Pointwise}  and its  mesh generating function is
\begin{equation}\label{eq:Bakhvalov mesh-Kopteva}
\varphi(t)=
\left\{
\begin{split}
&-\frac{\sigma \varepsilon}{\beta}  \ln(1-4t)  \quad &&\text{for $t\in [0,\vartheta]$},\\
&d_3 (t-\vartheta)+d_4(t-1+\vartheta)\quad &&\text{for $t\in (\vartheta,1-\vartheta)$},\\
&1+\frac{\sigma \varepsilon}{\beta}  \ln(1-4(1-t))  \quad &&\text{for $t\in [1-\vartheta,1]$}
\end{split}
\right.
\end{equation}
where $\sigma$  will be specified later, $\vartheta=1/4-C_1\varepsilon$ with some positive constant $C_1$ independent of $\varepsilon$ and $N$,  $d_3$ and $d_4$ are chosen so that $\varphi(t)$  is continuous at $t=\vartheta$ and $t=1-\vartheta$. 
The original Bakhvalov mesh \cite{Bakhvalov:1969-Towards} can be recovered from \eqref{eq:Bakhvalov mesh-Kopteva}
by setting $\vartheta=1/4-\mathcal{C}(\varepsilon)\varepsilon$ with $
0<C_2\le \mathcal{C}(\varepsilon) \le C_3$.

\begin{assumption}\label{assumption-varepsilon-N}
Assume  that $\varepsilon\le \min\{ \frac{\beta}{4\sigma},1\} N^{-1}$ in our analysis. In practice it is not a restriction. 
\end{assumption}

  For technical reasons, we also assume   
\begin{equation}\label{eq:CC1}
\frac{\sigma }{ 4e \beta  }\le C_1\le   \frac{1}{2}\max\{ \frac{\sigma}{\beta},\frac{1}{4}\}.  
\end{equation}
 Assume  $N\ge \max\{ 8, 2  \ln \frac{\sigma }{\beta}\}$.   Under Assumption \ref{assumption-varepsilon-N} and \eqref{eq:CC1}, we have $1/4-N^{-1}\le  \vartheta< 1/4$ and  $x_{N/4}\le 1/4$
  for    meshes   \eqref{eq:Bakhvalov mesh-Roos}  and \eqref{eq:Bakhvalov mesh-Kopteva}.    
 The location of  $\vartheta$ and conditions imposed on $N$ and $C_1$ will simplify our later analysis without changing the essential difficulties in our analysis.

The mesh points are $x_i=y_i=\psi(i/N)$ or $x_i=y_i=\varphi(i/N)$ for $i=0,1,\ldots,N$.
By drawing lines parallel to the axis through   mesh points $\{(x_i,y_j)\}$,  we obtain a Bakhvalov-type rectangular 
mesh with equidistant cells in the coarse region 
$\Omega_0=(x_{N/4},x_{3N/4})^2$ and anisotropic cells in
the layer region $\Omega\setminus \Omega_0$.  The triangulation is denoted by $\mathcal{T}^{N}$. Denote  by $\tau_{i,j}$ for the element $[x_{i},x_{i+1}]\times [y_j,y_{j+1}]$ and by $\tau$ for a generic  rectangular  element, which dimensions are written as $h_{x,\tau}$ and $h_{y,\tau}$. Define
$h_i:=x_{i+1}-x_i=y_{i+1}-y_i$ for $0\le i\le N-1$.

In the following lemma, we collect some important properties possessed by Bakhvalov-type meshes, which are important for convergence analysis. The reader is referred to \cite{Zhan1Liu2:2020-Optimal} for the detailed proof.
\begin{lemma}\label{lem:h-N-4-1}
Let Assumption \ref{assumption-varepsilon-N} hold true.  On  Bakhvalov-type  mesh  \eqref{eq:Bakhvalov mesh-Roos} or \eqref{eq:Bakhvalov mesh-Kopteva},  one has
\begin{align}
&h_{0}\le h_{1}\le \ldots \le h_{N/4-2},\label{eq:mesh-1}\\
&\frac{\sigma}{4\beta} \varepsilon \le h_{N/4-2}\le \frac{\sigma}{ \beta} \varepsilon,\label{eq:mesh-4}\\
&C\varepsilon\le h_{N/4-1} \le CN^{-1},\label{eq:mesh-5}\\
 &C_4N^{-1}\le h_{i}\le C_5 N^{-1}\quad  N/4 \le i \le N/2,\label{eq:mesh-6} \\
&x_{N/4-1}=x_{3N/4+1}\ge C\sigma \varepsilon \ln N,\quad
x_{N/4}=x_{3N/4}\ge C\sigma \varepsilon \ln (1/\varepsilon).
\end{align}
Note $h_i=h_{N-1-i}$ for $i=0,1,\ldots,N/2$.

Let $i^*=j^*=N/4-2$. Then one has
\begin{equation}\label{eq:layer-function-4} 
h_{i}^{ \mu} e^{- \beta x_i/\varepsilon}\le C\varepsilon^{\mu} N^{-\mu}\quad \text{for $0\le i\le i^*$  and $0\le \mu\le \sigma$}.
\end{equation}
Similar bounds hold for the variable $y$.
\end{lemma}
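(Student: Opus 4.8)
The plan is to handle both meshes \eqref{eq:Bakhvalov mesh-Roos} and \eqref{eq:Bakhvalov mesh-Kopteva} uniformly through their generating functions, exploiting three structural features: on the fine subinterval the generating function is increasing and \emph{convex}, it is \emph{symmetric} in the sense $\psi(1-t)=1-\psi(t)$ (and likewise for $\varphi$), and on the coarse subinterval it is \emph{affine}. I would record once that $\varphi'(t)=\frac{4\sigma\varepsilon}{\beta(1-4t)}$ is positive and increasing on $[0,\vartheta]$, that the Roos function differs only by the harmless factor $(1-\varepsilon)$, and that $h_i=\psi'(\xi)/N$ for some $\xi\in(i/N,(i+1)/N)$ by the mean value theorem; all fine-region widths are then controlled by evaluating $\psi'$ (resp.\ $\varphi'$) at a suitable endpoint.

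First I would dispose of the qualitative facts. Monotonicity \eqref{eq:mesh-1} is immediate from convexity, since forward differences of a convex increasing function with fixed step increase in $i$. The symmetry $h_i=h_{N-1-i}$ follows from $\psi(1-t)=1-\psi(t)$, because then $h_{N-1-i}=\bigl(1-\psi(i/N)\bigr)-\bigl(1-\psi((i+1)/N)\bigr)=h_i$. The borderline fine width \eqref{eq:mesh-4} comes from the mean value theorem: with $\xi\in\bigl(\tfrac14-\tfrac2N,\tfrac14-\tfrac1N\bigr)$ one has $1-4\xi\in(4/N,8/N)$, and inserting this into the formula for $\psi'$ pins $h_{N/4-2}$ between $\tfrac{\sigma\varepsilon}{2\beta}$ and $\tfrac{\sigma\varepsilon}{\beta}$, which is stronger than \eqref{eq:mesh-4}. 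Finally \eqref{eq:mesh-6} holds because the generating function is affine on the coarse region, so the spacing there is the constant $\bigl(x_{3N/4}-x_{N/4}\bigr)/(N/2)=\bigl(1-2x_{N/4}\bigr)/(N/2)$, which lies between explicit multiples of $N^{-1}$ once the bound $x_{N/4}\le\tfrac14$ is used.

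The transition cell \eqref{eq:mesh-5} is the one piece of delicate bookkeeping, since $h_{N/4-1}=x_{N/4}-x_{N/4-1}$ straddles the fine/coarse junction. I would evaluate $x_{N/4-1}=\tfrac{\sigma\varepsilon}{\beta}\ln(N/4)$ from the fine branch and $x_{N/4}$ from the affine coarse branch, using the continuity condition at $t=\vartheta$, the value $\vartheta=\tfrac14-C_1\varepsilon$, and the boundedness of the coarse slope. This writes $h_{N/4-1}$ as a nonnegative logarithmic term $-\tfrac{\sigma\varepsilon}{\beta}\ln(C_1\varepsilon N)$ plus a slope contribution of order $C_1\varepsilon$; the relation $\vartheta\ge\tfrac14-N^{-1}$ forces $C_1\varepsilon N\le1$, so the logarithmic term is of size $\mathcal{O}(N^{-1})$ because $t\mapsto -t\ln t$ is bounded on $(0,1]$, while the slope term lies between $C\varepsilon$ and $CN^{-1}$ by the two-sided bound on $C_1$ in \eqref{eq:CC1}; together these give both inequalities in \eqref{eq:mesh-5}. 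The same nodal values, together with $N\ge8$, yield the near-boundary location estimates, the right-boundary analogues following from the symmetry $\psi(1-t)=1-\psi(t)$.

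The essential and most delicate estimate is the layer bound \eqref{eq:layer-function-4}, which is what the convergence analysis actually consumes. Here I would combine the exponential identity $e^{-\beta x_i/\varepsilon}=(1-4i/N)^{\sigma}$ (with the $(1-\varepsilon)$ factor for \eqref{eq:Bakhvalov mesh-Roos}) with the width bound $h_i\le\psi'((i+1)/N)/N=\tfrac{4\sigma\varepsilon}{N\beta(1-4(i+1)/N)}$ coming from monotonicity of $\psi'$. Writing $s:=1-4i/N$, for $0\le i\le i^*=N/4-2$ one has $s\ge8/N$ and hence $1-4(i+1)/N=s-4/N\ge s/2$, so that
\begin{equation*}
h_i^{\mu}e^{-\beta x_i/\varepsilon}\le\Bigl(\tfrac{8\sigma}{\beta}\Bigr)^{\mu}\varepsilon^{\mu}N^{-\mu}s^{\sigma-\mu}.
\end{equation*}
The restriction $0\le\mu\le\sigma$ is exactly what makes $s^{\sigma-\mu}\le1$ (since $0<s\le1$) and keeps $(8\sigma/\beta)^{\mu}$ bounded, which yields the claimed $C\varepsilon^{\mu}N^{-\mu}$. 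The hard part is recognizing that $h_i$ must be bounded by $\psi'$ at the \emph{right} endpoint and that the surplus power $s^{\sigma-\mu}$ must be absorbed against the exponential decay; choosing the wrong endpoint or mishandling the range of $\mu$ would destroy the uniformity in $\varepsilon$ and $N$. The $y$-variable bounds are identical by symmetry.
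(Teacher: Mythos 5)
Your proposal cannot be checked against a proof in the paper itself, because the paper gives none: Lemma~\ref{lem:h-N-4-1} is quoted with the proof deferred to \cite{Zhan1Liu2:2020-Optimal}. Judged on its own merits, your argument follows the standard route (explicit generating functions, convexity plus the mean value theorem, boundedness of $t\mapsto -t\ln t$ on $(0,1]$), and its core is correct. In particular your proof of the key estimate \eqref{eq:layer-function-4} is exactly the right mechanism: from $s=1-4i/N\ge 8/N$ for $i\le i^*$ you get $1-4(i+1)/N\ge s/2$, hence $h_i^{\mu}e^{-\beta x_i/\varepsilon}\le (8\sigma/\beta)^{\mu}\varepsilon^{\mu}N^{-\mu}s^{\sigma-\mu}\le C\varepsilon^{\mu}N^{-\mu}$, where $0\le\mu\le\sigma$ is used to discard $s^{\sigma-\mu}$; this transfers verbatim to mesh \eqref{eq:Bakhvalov mesh-Roos} with $s=1-4(1-\varepsilon)i/N$ (then $s\ge 8(1-\varepsilon)/N$, which is all that is needed). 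The treatments of \eqref{eq:mesh-1}, \eqref{eq:mesh-4}, \eqref{eq:mesh-6}, the symmetry $h_i=h_{N-1-i}$, and the location bounds are likewise sound.

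The one genuine flaw is in your ``uniform'' handling of the two meshes at the transition cell \eqref{eq:mesh-5}. Your log-plus-slope decomposition of $h_{N/4-1}$ presupposes the structure of mesh \eqref{eq:Bakhvalov mesh-Kopteva}, where the logarithmic branch stops at $\vartheta<1/4$ and $x_{N/4}$ comes from the affine branch. For mesh \eqref{eq:Bakhvalov mesh-Roos} the logarithmic branch extends to $t=1/4$ itself, so $x_{N/4}=\frac{\sigma\varepsilon}{\beta}\ln(1/\varepsilon)$ and there is no affine contribution at all; the factor $(1-\varepsilon)$ is not merely ``harmless'' there, it is precisely what keeps $\psi(1/4)$ finite, so the two meshes are structurally different at this point. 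The repair is routine but must be stated: for \eqref{eq:Bakhvalov mesh-Roos} one computes directly
\[
h_{N/4-1}=\frac{\sigma\varepsilon}{\beta}\,\ln\frac{4/N+\varepsilon(1-4/N)}{\varepsilon},
\]
which is bounded below by $\frac{\sigma\varepsilon}{\beta}\ln 4\ge C\varepsilon$ (using $\varepsilon\le N^{-1}$ from Assumption~\ref{assumption-varepsilon-N}) and above by $\frac{\sigma\varepsilon}{\beta}\ln\frac{5}{N\varepsilon}\le CN^{-1}$ by the same $-t\ln t$ bound you invoke. The same case distinction affects the location estimate $x_{N/4}\ge C\sigma\varepsilon\ln(1/\varepsilon)$, which for \eqref{eq:Bakhvalov mesh-Roos} is an identity rather than a consequence of the continuity condition at $\vartheta$. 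With these Roos-mesh computations inserted, your proof is complete.
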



\subsection{Finite element method}
Now we present the finite element method for problem \eqref{eq:reaction diffusion equation}. First, 
 the weak form of
 problem \eqref{eq:reaction diffusion equation} is written as  
\begin{equation}\label{eq:weak formulation}
\left\{
\begin{array}{lr}
\text{find $u\in V$ such that for all $v\in V$}\\
a(u,v):=\varepsilon^2 (\nabla u,\nabla v)+(bu,v)=(f,v),
\end{array}
\right.
\end{equation}
with $V:=H^{1}_{0}(\Omega)$. The natural energy norm derived from $a(\cdot,\cdot)$ is    
$$
\Vert v \Vert_{\varepsilon}:=\left(\varepsilon^2 \vert v  \vert^2_{1}+  \Vert v\Vert^2\right)^{1/2}.
$$
The bilinear form $a(\cdot,\cdot)$ is coercive with respect to this energy norm, i.e.,
\begin{equation}\label{eq:energy-norm}
a(v,v)\ge \min\{2\beta^2,1\}\Vert v \Vert^2_{\varepsilon} \quad \forall v\in V.
\end{equation}
From the Lax-Milgram lemma, the weak formulation \eqref{eq:weak formulation} has a unique solution.

 Let $\mathcal{Q}_{k}(\tau)$   denote $k$th rectangular finite element. We introduce the finite element spaces
$$
V^N:=\{v\in H^1(\Omega):\;v|_{\tau}\in \mathcal{Q}_{k}(\tau)\quad \forall \tau\in\mathcal{T}^N \}\text{ and } V^N_0:=V^N\cap H^1_0(\Omega).
$$
Clearly, $V^N,V^N_0\subset V$. When we replace the infinite dimensional space $V$ with the finite dimensional space $V^N_0$, we get the $k$th order finite element method  
\begin{equation}\label{eq:FEM}
\left\{
\begin{array}{lr}
\text{find $u^N\in V^N_0$ such that for all $v^N_0\in V^N$}\\
a(u^N,v^N) =(f,v^N).
\end{array}
\right.
\end{equation}
Also, it is easy to very the  coercivity
\begin{equation}\label{eq:coercivity}
a(v^N,v^N)\ge C\Vert v^N \Vert^2_{\varepsilon}\quad\forall v^N\in V^N_0,
\end{equation}
and the Galerkin orthogonality
\begin{equation}\label{eq:orthogonality}
a(u-u^N,v^N)=0\quad\forall v^N\in V^N_0,
\end{equation}
where \eqref{eq:weak formulation} and \eqref{eq:FEM} have been used. Furthermore, we introduce
the balanced norm  in
 \cite{Roos1Scho2:2015-Convergence}, which is defined  by
 \begin{equation}
 \Vert v \Vert_b :=\left(\varepsilon\vert v \vert_1^2+\Vert v \Vert^2 \right)^{1/2}.
 \end{equation}
Clearly, the balanced norm $\Vert \cdot \Vert_b$ is stronger than the energy norm $\Vert \cdot \Vert_{\varepsilon}$ in the case of $0<\varepsilon\ll 1$.  Furthermore, the former is better suited to  capture of layers. For example,
for a typical layer 
function $e^{-x/\varepsilon}$, $\Vert e^{-x/\varepsilon} 
\Vert_{\varepsilon}$ and $\Vert e^{-x/\varepsilon} \Vert_{b}$ 
are of order $\mathcal{O}(\varepsilon^{1/2})$ and of order $
\mathcal{O}(1)$, respectively. These orders imply that the balanced norm $\Vert \cdot \Vert_b$ is more appropriate to capture layers than the energy norm $\Vert \cdot \Vert_{\varepsilon}$ when $0<\varepsilon\ll 1$. The reader is also referred to 
\cite{Roos1Scho2:2015-Convergence,Con1Fra2Lud3etc:2018-Finite} for discussions on these two norms.

\section{Uniform convergence}\label{sec:inter+convergence}
For convergence analysis in the balanced norm, we will present an interpolation operator, which components will be introduced  at first.

Set 
$\Omega_0^*:=(x_{N/4-1},x_{3N/4+1})^2$ and $\Omega^{**}_0:=(x_{N/4-2},x_{3N/4+2})^2$.  
Introduce a weighted  $L^2$-projection  $\pi$  as follows: for $s\in L^2(\Omega_0^*)$, find $\pi s\in W^N $ such that
$$
(b(s-\pi s),v^N)_{\Omega_0^*}=0\quad \forall v^N\in W^N, 
$$
where $W^N=:\{v|_{\Omega_0^*}:\;v\in V^N \}$.
 Of course, one has the   $L^2$-stability  
\begin{equation}\label{eq:L2-stability}
\Vert \pi v\Vert_{\Omega_0^*}\le C \Vert v \Vert_{\Omega_0^*}.
\end{equation} 
Denote by $\mathcal{I}$ the  Langrange interpolation operator from $C^0(\bar{\Omega})$ to $V^N$.
Furthermore,  
define $\chi\in V^N_0$ by
$$
\chi(s_l, t_m)=
\left\{
\begin{aligned}
&1\quad &&(s_l, t_m)\in \partial \Omega_0^*,\\
&0\quad && \text{otherwise},
\end{aligned}
\right.
$$
where   $\{s_l, t_m\}$ are the interpolation points of the Lagrange interpolation. 

Recall $w=\sum\limits_{i=1}^4w_i+\sum\limits_{i=1}^4z_i$. Then the interpolation used in convergence analysis is  defined by 
\begin{equation}\label{eq:projector-1}
P_cu=P_1v_0+P_2w,
\end{equation}
where  
\begin{equation}\label{eq:projector-2}
P_1v_0 :=
\left\{
\begin{aligned}
&\pi v_0 \quad &&\text{ in  $\Omega_0^*$},\\
&\mathcal{I}[(1-\chi )v_0+\chi \pi v_0]\quad&&\text{ in  $\Omega\setminus \Omega^{*}_0$}
\end{aligned}
\right.
\end{equation}
and
\begin{equation}\label{eq:projector-3}
P_2w :=
\left\{
\begin{aligned}
&0\quad &&\text{ in  $\Omega^*_0$},\\
&\mathcal{I}[(1-\chi  )w]\quad&&\text{ in  $\Omega\setminus \Omega^*_0$}.
\end{aligned}
\right.
\end{equation}
Clearly, $P_cu\in V^N_0$.

The following lemma provides some pointwise bounds  for errors between the Lagrange interpolant and the $L^2$ projection.
\begin{lemma}\label{lem:main}
For $v_0$ introduced in Assumption \ref{assumption-regularity},  one has
\begin{align*}
&\Vert \mathcal{I}v_0-\pi v_0 \Vert_{\infty,\partial\Omega^*_0}+\Vert \mathcal{I}v_0-\pi v_0 \Vert_{\infty, \Omega^*_0 }+\Vert  v_0-\pi v_0 \Vert_{\infty, \Omega^*_0 }\le CN^{-(k+1)},\\
&\Vert \nabla (\mathcal{I}v_0-\pi v_0)\Vert_{\Omega_0} \le CN^{-k},\quad
\Vert \nabla (\mathcal{I}v_0-\pi v_0)\Vert_{\Omega^*_0\setminus\Omega_0}\le C\varepsilon^{-1/2} N^{-(k+1)}.
\end{align*}
\end{lemma}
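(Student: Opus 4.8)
The plan is to reduce every one of the five estimates to the single observation that $\mathcal{I}v_0-\pi v_0$ is nothing but $\pi$ applied to the Lagrange interpolation error. Since $\mathcal{I}v_0\in V^N$, its restriction to $\Omega_0^*$ lies in $W^N$, and because $\pi$ reproduces $W^N$ (the weighted inner product is genuine as $b\ge 2\beta^2>0$) we have $\pi(\mathcal{I}v_0)=\mathcal{I}v_0$ on $\Omega_0^*$. Writing $\eta:=\mathcal{I}v_0-v_0$ this gives
\begin{equation*}
\mathcal{I}v_0-\pi v_0=\pi(\mathcal{I}v_0)-\pi v_0=\pi\eta\quad\text{on }\Omega_0^*.
\end{equation*}
The starting data are the classical interpolation bounds $\Vert\eta\Vert_{\infty,\Omega_0^*}\le CN^{-(k+1)}$ and $\Vert\eta\Vert_{\Omega_0^*}\le CN^{-(k+1)}$, which follow from (anisotropic) cellwise interpolation theory, the regularity \eqref{eq:(2.1b)} (the derivatives of $v_0$ of total order up to $k+1$ are bounded by a constant, and higher ones are controlled by the slack up to order $k+3$), and the fact that by Lemma \ref{lem:h-N-4-1} every mesh width occurring in $\Omega_0^*$, both the coarse widths \eqref{eq:mesh-6} and the single frame width $h_{N/4-1}$ in \eqref{eq:mesh-5}, is $\le CN^{-1}$. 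Everything then rests on how $\pi$ propagates these two bounds.

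For the three $L^\infty$ estimates I would invoke the $L^\infty$-stability of the weighted projection, $\Vert\pi\eta\Vert_{\infty,\Omega_0^*}\le C\Vert\eta\Vert_{\infty,\Omega_0^*}$. This directly yields the second term, $\Vert\mathcal{I}v_0-\pi v_0\Vert_{\infty,\Omega_0^*}=\Vert\pi\eta\Vert_{\infty,\Omega_0^*}\le CN^{-(k+1)}$; the first term follows since $\partial\Omega_0^*\subset\overline{\Omega_0^*}$ and the functions are continuous, so the boundary sup-norm is dominated by the sup-norm over $\Omega_0^*$; and the third follows from the splitting $v_0-\pi v_0=-\eta+\pi\eta$ together with the triangle inequality.

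The two gradient estimates I would obtain from inverse inequalities applied to $\pi\eta$, exploiting the different mesh scales in $\Omega_0$ and in the frame $\Omega_0^*\setminus\Omega_0$. On the quasi-uniform coarse block $\Omega_0$, where the cell size is $\sim N^{-1}$ by \eqref{eq:mesh-6}, the inverse inequality together with the $L^2$-stability \eqref{eq:L2-stability} gives
\begin{equation*}
\Vert\nabla\pi\eta\Vert_{\Omega_0}\le CN\Vert\pi\eta\Vert_{\Omega_0}\le CN\Vert\eta\Vert_{\Omega_0^*}\le CN^{-k},
\end{equation*}
which is the fourth bound. On the frame a naive inverse inequality with width $h_{N/4-1}\ge C\varepsilon$ would cost a full factor $\varepsilon^{-1}$; to recover the sharp $\varepsilon^{-1/2}$ I would instead estimate cellwise, combining the inverse inequality with the already proved $L^\infty$ bound of $\pi\eta$ and the smallness of the cell measure: on a frame cell $\tau$ with small width $h_{N/4-1}$ and remaining width $\sim N^{-1}$,
\begin{equation*}
\Vert\nabla\pi\eta\Vert_\tau\le Ch_{N/4-1}^{-1}|\tau|^{1/2}\Vert\pi\eta\Vert_{\infty,\tau}\le Ch_{N/4-1}^{-1/2}N^{-1/2}N^{-(k+1)}\le C\varepsilon^{-1/2}N^{-1/2}N^{-(k+1)};
\end{equation*}
squaring, summing over the $\mathcal{O}(N)$ frame cells, and taking the square root gives $\Vert\nabla\pi\eta\Vert_{\Omega_0^*\setminus\Omega_0}\le C\varepsilon^{-1/2}N^{-(k+1)}$. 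The extra half power of $h_{N/4-1}$ coming from $|\tau|^{1/2}$ is precisely what cancels half of the inverse-inequality blow-up.

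The main obstacle is the $L^\infty$-stability of $\pi$ used above: only the $L^2$-stability \eqref{eq:L2-stability} is available a priori, and the mesh on $\Omega_0^*$ is \emph{not} quasi-uniform, since the ratio between a coarse width $\sim N^{-1}$ and the frame width $h_{N/4-1}$, which can be as small as $\sim\varepsilon$, is unbounded as $\varepsilon\to0$ under Assumption \ref{assumption-varepsilon-N}. Classical $L^\infty$-stability results for $L^2$-projections assume bounded local mesh ratios and so do not apply directly; I therefore expect the real work to be a separate stability lemma establishing $\Vert\pi v\Vert_{\infty,\Omega_0^*}\le C\Vert v\Vert_{\infty,\Omega_0^*}$ for this one-layer-refined geometry (presumably via a localized or weighted argument exploiting the rapid decay of the inverse mass matrix, i.e. the ``stabilities of the $L^2$ projection'' announced in the introduction). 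Once that stability is in hand, the five estimates follow by the bookkeeping above.
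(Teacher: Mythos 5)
Your proposal reproduces the paper's proof essentially step by step: the same identity $\mathcal{I}v_0-\pi v_0=\pi(\mathcal{I}v_0-v_0)$, the same interpolation bounds on $\Omega_0^*$, the $L^\infty$-stability of $\pi$ for the three sup-norm estimates, the inverse inequality combined with the $L^2$-stability \eqref{eq:L2-stability} on $\Omega_0$, and the inverse inequality combined with the sup-norm bound on the frame (the paper does this last step globally, writing $\mathrm{meas}(\Omega_0^*\setminus\Omega_0)^{1/2}\le Ch_{N/4-1}^{1/2}$ and then using $h_{N/4-1}\ge C\varepsilon$, rather than cellwise; the arithmetic is identical). The only substantive difference is the point you flag as the ``main obstacle'': the paper simply invokes \cite[Theorem 1]{Crou1Tome2:1987-stability} and, in the remark immediately following the lemma, explains why no quasi-uniformity is needed --- that stability theorem is one-dimensional and its proof adapts to the $L^2$-projection onto piecewise polynomials on the 1D mesh $x_{N/4-1}<x_{N/4}<\cdots<x_{3N/4+1}$ \emph{without any mesh-ratio hypothesis}, and the stability on $\Omega_0^*$ then follows because the mesh there is a tensor-product mesh. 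So the unbounded ratio between $h_{N/4-1}\sim\varepsilon$ and the coarse width $\sim N^{-1}$, which would indeed defeat multi-dimensional stability results that require locally bounded mesh ratios, is circumvented by the 1D-plus-tensor-product structure rather than confronted by a new localized-decay argument; substituting this observation for your assumed stability lemma makes your proof complete and coincident with the paper's.
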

\begin{proof}
Standard interpolation theory and Lemma  \ref{lem:h-N-4-1} imply 
$\Vert v_0-\mathcal{I}v_0   \Vert_{\infty, \Omega^*_0  }
+\Vert v_0- \mathcal{I}v_0  \Vert_{  \Omega^*_0  }
\le  CN^{-(k+1)}$.
From the $L^{\infty}$-stability of the $L^2$-projection $\pi$ \cite[Theorem 1]{Crou1Tome2:1987-stability}, one has
\begin{align*}
&\Vert \mathcal{I}v_0-\pi v_0 \Vert_{\infty, \Omega^*_0 }
=\Vert \pi(\mathcal{I}v_0- v_0) \Vert_{\infty, \Omega^*_0 }
\le  C \Vert \mathcal{I}v_0-  v_0 \Vert_{\infty, \Omega^*_0  }    
\le  CN^{-(k+1)},\\
& \Vert \mathcal{I}v_0-\pi v_0 \Vert_{\infty,\partial\Omega^*_0}\le
\Vert \mathcal{I}v_0-\pi v_0 \Vert_{\infty, \Omega^*_0 } 
\le  CN^{-(k+1)},\\
& \Vert  v_0-\pi v_0 \Vert_{\infty, \Omega^*_0}\le
\Vert  v_0-\mathcal{I}v_0 \Vert_{\infty, \Omega^*_0}+
\Vert \mathcal{I}v_0-\pi v_0 \Vert_{\infty, \Omega^*_0 } 
\le  CN^{-(k+1)}.
\end{align*} 

H\"{o}lder inequalities, inverse inequalities, the $L^{\infty}$-stability of the $L^2$-projection \cite[Theorem 1]{Crou1Tome2:1987-stability} and Lemma \ref{lem:h-N-4-1} yield
\begin{align*}
\Vert \nabla (\mathcal{I}v_0-\pi v_0)\Vert_{\Omega^*_0\setminus\Omega_0}
\le &
\mr{meas}(\Omega^*_0\setminus\Omega_0)^{1/2}\Vert \nabla (\mathcal{I}v_0-\pi v_0)\Vert_{\infty,\Omega^*_0\setminus\Omega_0}\\
\le &Ch^{-1/2}_{N/4-1}  \Vert  \mathcal{I}v_0-\pi v_0 \Vert_{\infty,\Omega^*_0\setminus\Omega_0}\\
=&Ch^{-1/2}_{N/4-1}  \Vert  \pi(\mathcal{I}v_0- v_0) \Vert_{\infty,\Omega^*_0 }\\
\le &Ch^{-1/2}_{N/4-1}  \Vert  \mathcal{I}v_0-  v_0 \Vert_{\infty,\Omega^*_0}\\
\le &C\varepsilon^{-1/2} N^{-(k+1)}.
\end{align*}

Inverse inequalities and the $L^2$-stability of the $L^2$-projection \eqref{eq:L2-stability} yield
\begin{align*}
&\Vert \nabla (\mathcal{I}v_0-\pi v_0)\Vert_{\Omega_0} 
\le CN \Vert  \mathcal{I}v_0-\pi v_0 \Vert_{\Omega_0}\le CN \Vert  \mathcal{I}v_0-\pi v_0 \Vert_{\Omega_0^*}\\
\le &CN \Vert  \mathcal{I}v_0-  v_0 \Vert_{\Omega^*_0}  
\le  CN^{-k}.
\end{align*}
\end{proof}

\begin{remark}
Define 
$$
V_h=\{v\in C[x_{N/4-1},x_{3N/4+1}]; \; v|_{(s_j,s_{j+1})}\in P_k,\;j=0,\ldots,N/2+1 \}
$$ 
with $s_j=x_{N/4-1+j}$ for $j=0,\ldots,N/2+2$. 
In the same way as the proof of \cite[Theorem 1]{Crou1Tome2:1987-stability}, we could easily prove the $L^{\infty}$-stability of the $L^2$-projection $\pi_h:\;L^2(x_{N/4-1},x_{3N/4+1})\to V_h$, that is 
$$
\Vert  \pi_h v \Vert_{\infty,(x_{N/4-1},x_{3N/4+1})}
\le C\Vert   v \Vert_{\infty,(x_{N/4-1},x_{3N/4+1})}
\quad
\forall v\in L^{\infty}(x_{N/4-1},x_{3N/4+1}).
$$
Furthermore, from tensor product we could easily obtain
$$
\Vert  \pi  v \Vert_{\infty,\Omega_0^*}
\le
C \Vert     v \Vert_{\infty,\Omega_0^*}
\quad
\forall v\in L^{\infty}(\Omega_0^*).
$$

\end{remark}

The error $u-u^N$ is split   as follows:
$$
u-u^N=(u-P_cu)+(P_cu-u^N)=:\eta+\xi.
$$
From the coercivity \eqref{eq:coercivity} and the Galerkin orthogonality \eqref{eq:orthogonality} we have
\begin{equation}\label{eq:whole idea}
 C \Vert \xi \Vert^2_{\varepsilon}\le   a(P_c u-u,\xi) 
\le  \varepsilon^{1/2} \Vert \eta \Vert_b \Vert  \xi \Vert_{\varepsilon}+|( b \eta, \xi )|.
\end{equation}

In the following analysis,  we give the estimations on each term in the right-hand side of \eqref{eq:whole idea}. 
\begin{lemma}\label{lem:1}
Let $\sigma\ge k+1$. Under  Assumptions \ref{assumption-regularity} and \ref{assumption-varepsilon-N} we have
 $$
|(b\eta,\xi)|\le C \varepsilon^{1/2} N^{-(k+1)} \ln^{1/2} N \Vert \xi \Vert.
$$
\end{lemma}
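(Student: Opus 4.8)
The plan is to exploit the defining orthogonality of the weighted projection $\pi$ on the coarse subdomain and the exponential smallness of the layer part there, while on the thin boundary ring I rely on classical interpolation estimates together with the smallness of its measure. First I would split
$$(b\eta,\xi)=(b\eta,\xi)_{\Omega_0^*}+(b\eta,\xi)_{\Omega\setminus\Omega_0^*}.$$
On $\Omega_0^*$ one has $P_cu=\pi v_0$, hence $\eta=(v_0-\pi v_0)+w$. Since $\xi=P_cu-u^N\in V^N_0$, its restriction to $\Omega_0^*$ lies in $W^N$, so the definition of $\pi$ gives $(b(v_0-\pi v_0),\xi)_{\Omega_0^*}=0$. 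This is the crucial cancellation: it removes the regular-part error, which is only $\mathcal{O}(N^{-(k+1)})$ and carries no factor $\varepsilon^{1/2}$, and leaves $(b\eta,\xi)_{\Omega_0^*}=(bw,\xi)_{\Omega_0^*}$, bounded by $C\|w\|_{\Omega_0^*}\|\xi\|$.

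It then remains to show $\|w\|_{\Omega_0^*}\le C\varepsilon^{1/2}N^{-(k+1)}$. I would integrate the a priori bounds \eqref{bound-w-1} and \eqref{bound-z-1} over $\Omega_0^*=(x_{N/4-1},x_{3N/4+1})^2$. Each boundary-layer factor such as $e^{-\beta y/\varepsilon}$ integrates to $\mathcal{O}(\varepsilon)$ in its transverse variable, producing the $\varepsilon^{1/2}$, while the remaining factor is evaluated at the inner edge $x_{N/4-1}\ge C\sigma\varepsilon\ln N$ of Lemma \ref{lem:h-N-4-1}, so that $e^{-\beta x_{N/4-1}/\varepsilon}\le N^{-\sigma}\le N^{-(k+1)}$ because $\sigma\ge k+1$; the corner layers decay faster still. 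Hence $|(b\eta,\xi)_{\Omega_0^*}|\le C\varepsilon^{1/2}N^{-(k+1)}\|\xi\|$.

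On $\Omega\setminus\Omega_0^*$ I would use Cauchy--Schwarz and the boundedness of $b$ to get $|(b\eta,\xi)_{\Omega\setminus\Omega_0^*}|\le C\|\eta\|_{\Omega\setminus\Omega_0^*}\|\xi\|$, and then estimate $\|\eta\|_{\Omega\setminus\Omega_0^*}$. Writing $u=v_0+w$ there, the definitions \eqref{eq:projector-2}--\eqref{eq:projector-3} collapse by linearity of $\mathcal{I}$ to $P_cu=\mathcal{I}[(1-\chi)u+\chi\pi v_0]$, so that $\eta=(u-\mathcal{I}u)+\mathcal{I}[\chi(u-\pi v_0)]$. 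For the first term the smooth part contributes $\|v_0-\mathcal{I}v_0\|_{\Omega\setminus\Omega_0^*}\le \mathrm{meas}(\Omega\setminus\Omega_0^*)^{1/2}\,\|v_0-\mathcal{I}v_0\|_{\infty}\le C\varepsilon^{1/2}\ln^{1/2}N\,N^{-(k+1)}$, since the outer ring has width $\mathcal{O}(\varepsilon\ln N)$; this is exactly where the factor $\ln^{1/2}N$ enters. The layer part $w-\mathcal{I}w$ is controlled by anisotropic interpolation estimates on the fine mesh, using the derivative bounds \eqref{bound-w-1}, \eqref{bound-z-1} and the mesh properties of Lemma \ref{lem:h-N-4-1} (in particular \eqref{eq:layer-function-4}), giving $\mathcal{O}(\varepsilon^{1/2}N^{-(k+1)})$. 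The correction $\mathcal{I}[\chi(u-\pi v_0)]$ is supported on the single layer of cells abutting $\partial\Omega_0^*$, where $\|v_0-\pi v_0\|_{\infty,\partial\Omega_0^*}\le CN^{-(k+1)}$ by Lemma \ref{lem:main} and $w$ is exponentially small, so it is negligible at the same order.

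Collecting the two regions yields the claimed bound. I expect the main obstacle to be the layer contribution $\|w-\mathcal{I}w\|_{\Omega\setminus\Omega_0^*}$ together with the $\chi$-correction: these demand a careful, element-by-element use of the anisotropic derivative bounds and the local mesh scales, so that the fine cells near the layers absorb the large derivatives of $w$, whereas the orthogonality cancellation on $\Omega_0^*$ and the measure estimate on the outer ring are comparatively routine once set up.
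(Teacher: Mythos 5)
Your proposal is correct and follows essentially the same route as the paper: after the identical decomposition (orthogonality of $\pi$ annihilating $(b(v_0-\pi v_0),\xi)_{\Omega_0^*}$, exponential smallness of $w$ on $\Omega_0^*$, interpolation errors of $v_0$ and $w$ on $\Omega\setminus\Omega_0^*$ whose measure $\mathcal{O}(\varepsilon\ln N)$ supplies the factor $\varepsilon^{1/2}\ln^{1/2}N$, and the $\chi$-correction supported on the ring of cells of width $h_{N/4-2}\le C\varepsilon$ abutting $\partial\Omega_0^*$, handled via Lemma \ref{lem:main}), the only cosmetic difference is that you put the measure factor on $\eta$ through $\Vert\eta\Vert\le \mathrm{meas}^{1/2}\,\Vert\eta\Vert_{\infty}$ whereas the paper uses $\Vert\eta\Vert_{\infty}\Vert\xi\Vert_{1}$ followed by Cauchy--Schwarz. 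Your claim that $\Vert w-\mathcal{I}w\Vert_{\Omega\setminus\Omega_0^*}=\mathcal{O}(\varepsilon^{1/2}N^{-(k+1)})$ with no logarithm is slightly optimistic when $\sigma=k+1$ (the per-cell estimates only sum to $\mathcal{O}\bigl((\varepsilon\ln N)^{1/2}N^{-(k+1)}\bigr)$), but this is harmless since the target bound already carries $\ln^{1/2}N$.
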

\begin{proof}
Our arguments are based on the following splitting
\begin{align*}
(b\eta,\xi)=&(b(v_0-P_1v_0),\xi)+(b(w-P_2w),\xi)\\
=&(b(v_0-\pi v_0),\xi)_{\Omega_0^*}+(b(v_0-\mathcal{I}v_0),\xi)_{\Omega\setminus\Omega_0^*}
+(b\;\mathcal{I}[\chi  (v_0-\pi v_0) ],\xi)_{\Omega^{**}_0\setminus\Omega^*_0}\\
&+(bw,\xi)_{\Omega^*_0}+(b(w-\mathcal{I}w),\xi)_{\Omega\setminus\Omega^*_0}
+(b\;\mathcal{I}(\chi   w),\xi)_{\Omega^{**}_0\setminus\Omega^*_0}\\
=:&\mr{I}+\mr{II}+\mr{III}+\mr{IV}+\mr{V+\mr{VI}}.
\end{align*}

From the definition of $\pi$, we obtain
\begin{equation}\label{eq:I}
\mr{I}=0.
\end{equation}

From   \cite[Lemma 4]{Zhan1Liu2:2020-Convergence},  one has
\begin{equation}\label{eq:II}
\begin{aligned}
|\mr{II}|+|\mr{V}|\le & C \left( \Vert v_0-\mathcal{I}v_0 \Vert_{\infty,\Omega\setminus\Omega^*_0}  
+  \Vert w-\mathcal{I}w \Vert_{\infty,\Omega\setminus\Omega^*_0} \right) \Vert \xi \Vert_{1,\Omega\setminus\Omega^*_0  } \\
\le & C   N^{-(k+1)} \mr{meas}^{1/2}(\Omega\setminus\Omega^*_0)\; \Vert \xi \Vert_{\Omega\setminus\Omega^*_0} \\
\le & 
C\varepsilon^{1/2} N^{-(k+1)} \ln^{1/2} N\Vert \xi \Vert.
\end{aligned}
\end{equation}

Lemmas \ref{lem:h-N-4-1} and \ref{lem:main}, \eqref{bound-w-1}  and \eqref{bound-z-1} yield
\begin{equation}\label{eq:III+V}
\begin{aligned}
|\mr{III}|+|\mr{VI}|\le &C\left( \Vert \mathcal{I}v_0-\pi v_0 \Vert_{\infty,\partial\Omega^*_0} 
+\Vert \mathcal{I}w \Vert_{\infty,\partial\Omega^*_0}  
 \right)\Vert \xi \Vert_{1,\Omega^{**}_0\setminus\Omega^*_0}\\
 \le &
 C\left( N^{-(k+1)}  +N^{-\sigma} \right)h^{1/2}_{N/4-2} \Vert \xi \Vert_{\Omega^{**}_0\setminus\Omega^*_0}\\
 \le &
 C \varepsilon^{1/2 } N^{-(k+1)}  \Vert \xi \Vert.
 \end{aligned}
\end{equation}

From \eqref{bound-w-1}  and \eqref{bound-z-1}, we get
\begin{equation}\label{eq:IV}
|\mr{IV}|\le C \Vert w \Vert_{\Omega^*_0} \Vert \xi \Vert_{\Omega^*_0}
\le C \varepsilon^{1/2} N^{-\sigma} \Vert \xi \Vert.
\end{equation}

Collecting \eqref{eq:I}--\eqref{eq:IV}, we are done.
\end{proof}

\begin{lemma}\label{lem:2}
Let $\sigma\ge k+1$. Under  Assumptions \ref{assumption-regularity} and \ref{assumption-varepsilon-N} we have
$$
\Vert  \eta\Vert_b\le C N^{-k}.
$$
\end{lemma}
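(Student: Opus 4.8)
The plan is to estimate the two pieces of $\Vert\eta\Vert_b^2=\varepsilon|\eta|_1^2+\Vert\eta\Vert^2$ separately, and to do each region by region according to the three-part structure built into $P_c$. Writing $\eta=(v_0-P_1v_0)+(w-P_2w)$, I would split $\Omega$ into the interior block $\Omega_0^*$ (where $\pi$ acts and $P_2w=0$), the outer layer region $\Omega\setminus\Omega_0^{**}$ (where $\chi\equiv 0$, so $P_1v_0=\mathcal{I}v_0$ and $P_2w=\mathcal{I}w$), and the thin transition frame $\Omega_0^{**}\setminus\Omega_0^*$ (where the corrections $\mathcal{I}[\chi(\cdot)]$ live). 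The guiding principle is that every contribution comes out $\le CN^{-k}$, the single genuinely limiting term being the $H^1$-seminorm of the Lagrange error of the layer part $w$ in the outer region.

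On $\Omega_0^*$ one has $v_0-P_1v_0=v_0-\pi v_0$ and $w-P_2w=w$. For the regular part I would invoke Lemma \ref{lem:main} directly: it gives $\Vert v_0-\pi v_0\Vert_{\infty,\Omega_0^*}\le CN^{-(k+1)}$ for the $L^2$ part, and $\Vert\nabla(\mathcal{I}v_0-\pi v_0)\Vert_{\Omega_0}\le CN^{-k}$ together with $\Vert\nabla(\mathcal{I}v_0-\pi v_0)\Vert_{\Omega_0^*\setminus\Omega_0}\le C\varepsilon^{-1/2}N^{-(k+1)}$, so that after adding the standard interpolation error $|v_0-\mathcal{I}v_0|_{1,\Omega_0^*}\le CN^{-k}$ and weighting by $\varepsilon^{1/2}$ we get $\varepsilon^{1/2}|v_0-\pi v_0|_{1,\Omega_0^*}\le CN^{-k}$. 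For the layer part I would use that on $\Omega_0^*$ both coordinates exceed $x_{N/4-1}\ge C\sigma\varepsilon\ln(1/\varepsilon)$ (Lemma \ref{lem:h-N-4-1}), so by \eqref{bound-w-1}--\eqref{bound-z-1} both $\Vert w\Vert_{\Omega_0^*}$ and $\varepsilon^{1/2}|w|_{1,\Omega_0^*}$ are $O(\varepsilon^{1/2}N^{-\sigma})\le CN^{-(k+1)}$, using $\sigma\ge k+1$.

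The heart of the proof is the outer region $\Omega\setminus\Omega_0^{**}$, where $\eta=(v_0-\mathcal{I}v_0)+(w-\mathcal{I}w)$. The regular part is harmless, $\varepsilon^{1/2}|v_0-\mathcal{I}v_0|_1\le CN^{-k}$, since the cells are $O(N^{-1})$ and $v_0$ has bounded derivatives. For $w$ I would argue cell by cell: for the $y$-boundary layer $w_1$, standard interpolation on a cell $\tau$ of $y$-width $h_j$ yields $\Vert\partial_y(w_1-\mathcal{I}w_1)\Vert_\tau^2\le C\varepsilon^{-2(k+1)}h_j^{2k+1}e^{-2\beta y_j/\varepsilon}$, and then I sum over the layer cells via \eqref{eq:layer-function-4}. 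Writing $h_j^{2k+1}e^{-2\beta y_j/\varepsilon}=(h_j^{k+1}e^{-\beta y_j/\varepsilon})(h_j^{k}e^{-\beta y_j/\varepsilon})$ and applying \eqref{eq:layer-function-4} with $\mu=k+1$ and $\mu=k$ (both admissible because $\sigma\ge k+1$) bounds each term by $C\varepsilon^{2k+1}N^{-(2k+1)}$, so summing the $O(N)$ terms gives $\sum_j h_j^{2k+1}e^{-2\beta y_j/\varepsilon}\le C\varepsilon^{2k+1}N^{-2k}$, whence $|w_1-\mathcal{I}w_1|_1\le C\varepsilon^{-1/2}N^{-k}$ and $\varepsilon^{1/2}|w_1-\mathcal{I}w_1|_1\le CN^{-k}$; the corner layers decay in both variables and are only better. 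An analogous but one-power-higher computation gives $\Vert w-\mathcal{I}w\Vert_{\Omega\setminus\Omega_0^*}\le CN^{-(k+1)}\ln^{1/2}N$. I expect this seminorm term to be the main obstacle: the naive energy-norm estimate $\varepsilon|w-\mathcal{I}w|_1\le CN^{-k}$ would only give $\varepsilon^{1/2}|w-\mathcal{I}w|_1\le C\varepsilon^{-1/2}N^{-k}$, which is not good enough, so one must extract the extra power of $\varepsilon$ by the sharp cell-wise summation above, and it is exactly here that $\sigma\ge k+1$ is needed to make the weighted sum converge.

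Finally, on the transition frame $\Omega_0^{**}\setminus\Omega_0^*$ the corrections $g_1=\mathcal{I}[\chi(v_0-\pi v_0)]$ and $g_2=\mathcal{I}[\chi w]$ vanish at all nodes except those on $\partial\Omega_0^*$, where their values are $(v_0-\pi v_0)|_{\partial\Omega_0^*}$ and $w|_{\partial\Omega_0^*}$; by Lemma \ref{lem:main} the former is $\le CN^{-(k+1)}$, and by the exponential smallness of $w$ on $\partial\Omega_0^*$ the latter is $\le CN^{-\sigma}$. Since the frame has width $h_{N/4-2}\le C\varepsilon$ by \eqref{eq:mesh-4}, the $L^2$ norms carry a factor $\varepsilon^{1/2}$, and I would control the seminorms by anisotropic inverse inequalities on the frame cells; the $\varepsilon$-width cancels the $\varepsilon^{-1}$ produced by differentiating across the thin direction, while $\varepsilon^{1/2}\le CN^{-1/2}$ (Assumption \ref{assumption-varepsilon-N}) absorbs the $h_j^{-1}$ summation along the frame, so that all these contributions are again $\le CN^{-k}$. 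Collecting the three regions then proves $\Vert\eta\Vert_b\le CN^{-k}$.
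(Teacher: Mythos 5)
Your proposal follows the same route as the paper's proof: the same splitting of $\eta$ induced by the structure of $P_c$ (projection error on $\Omega_0^*$, Lagrange errors of $v_0$ and $w$ outside, and the two corrections $\mathcal{I}[\chi(\cdot)]$ on the frame $\Omega_0^{**}\setminus\Omega_0^*$), the same use of Lemma~\ref{lem:main} for every term involving $\pi v_0$, and essentially the same frame estimate (nodal smallness times $h_{N/4-2}^{1/2}$, inverse inequalities, and $h_{N/4-2}\sim\varepsilon$ from \eqref{eq:mesh-4}). You even treat a term that the paper's own splitting silently drops, namely $\Vert w\Vert_{b,\Omega_0^*}$ coming from $P_2w=0$ on $\Omega_0^*$; your bound for it is correct in substance, although you justify it by ``$x_{N/4-1}\ge C\sigma\varepsilon\ln(1/\varepsilon)$'', which is the paper's bound for $x_{N/4}$, not $x_{N/4-1}$; what holds and suffices is $e^{-\beta y_{N/4-1}/\varepsilon}\le CN^{-\sigma}$, read off directly from \eqref{eq:Bakhvalov mesh-Roos} and Assumption~\ref{assumption-varepsilon-N}.

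There is, however, a genuine gap in the term you yourself call the heart of the proof, $\varepsilon^{1/2}\vert w_1-\mathcal{I}w_1\vert_{1,\Omega\setminus\Omega_0^*}$. Your cell-by-cell summation rests on \eqref{eq:layer-function-4}, which is valid only for indices $j\le j^*=N/4-2$, i.e.\ on the strip $[0,1]\times[0,y_{N/4-1}]$. But $\Omega\setminus\Omega_0^*$ also contains all cells with $y\ge y_{N/4-1}$ (the row $j=N/4-1$ inside the lateral strips, the lateral strips $x\le x_{N/4-1}$ and $x\ge x_{3N/4+1}$, and the top strip), and your argument says nothing about them: the conclusion $\vert w_1-\mathcal{I}w_1\vert_1\le C\varepsilon^{-1/2}N^{-k}$ is silently asserted for the whole outer region. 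This is not mere bookkeeping. On the row $j=N/4-1$ the inequality \eqref{eq:layer-function-4} is unavailable; one only knows $h_{N/4-1}\le CN^{-1}$ and $e^{-\beta y_{N/4-1}/\varepsilon}\le CN^{-\sigma}$ (not $\varepsilon^{\sigma}$), and inserting these into your per-cell bound gives for that row a total of order $\varepsilon^{-2(k+1)}\,x_{N/4-1}\,N^{-(2k+1)}N^{-2\sigma}$, which for, say, $k=1$, $\sigma=2$, $\varepsilon=N^{-3}$ is of order $N^{2}\ln N$ and thus exceeds the target $\varepsilon^{-1}N^{-2k}=N$. So the mechanism of interpolation accuracy genuinely breaks down there; in the region $y\ge y_{N/4-1}$ one must instead exploit exponential smallness, bounding $\vert w_1-\mathcal{I}w_1\vert_1\le\vert w_1\vert_1+\vert\mathcal{I}w_1\vert_1\le C\varepsilon^{-1/2}N^{-\sigma}$ via the decay of $w_1$ together with the stability of $\mathcal{I}$ and inverse estimates. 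This is exactly the modification the paper makes when it invokes \cite[Lemma 5]{Zhan1Liu2:2020-Convergence} and replaces equation (3.15) there by $\vert w_1\vert_{1,[0,1]\times[y_{N/4-1},1]}\le\varepsilon^{-1/2}N^{-(k+1)}$. The same supplement is needed for your $L^2$ claim $\Vert w-\mathcal{I}w\Vert_{\Omega\setminus\Omega_0^*}\le CN^{-(k+1)}\ln^{1/2}N$. Once this smallness argument for $\{y\ge y_{N/4-1}\}$ (and its analogues for the other layer components) is added, your proof is complete and coincides with the paper's.
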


\begin{proof}
From \eqref{eq:projector-1}, \eqref{eq:projector-2} and \eqref{eq:projector-3} one has
$$
\begin{aligned}
\Vert \eta \Vert_b\le  &\Vert  v_0-\pi v_0  \Vert_{b,\Omega^*_0}+\Vert  v_0-\mathcal{I}v_0  \Vert_{b,\Omega\setminus\Omega^*_0}+\Vert \mathcal{I}(\chi   (v_0-\pi v_0) )  \Vert_{b,\Omega^{**}_0\setminus \Omega^*_0}\\
&+
\Vert  w-\mathcal{I}w  \Vert_{b,\Omega\setminus\Omega^*_0}+\Vert  \mathcal{I}(\chi  w)  \Vert_{b,\Omega^{**}_0\setminus \Omega^*_0 }\\
=:&S_1+S_2+S_3+S_4+S_5.
\end{aligned}
$$

To analyze $S_1$,  we need the following bounds 
\begin{align*}
\Vert \nabla (v_0-\pi v_0) \Vert_{\Omega_0}
\le & \Vert \nabla (v_0-\mathcal{I}v_0)\Vert_{\Omega_0}+\Vert \nabla (\mathcal{I}v_0-\pi v_0)\Vert_{\Omega_0} 
\le   CN^{-k}, \\
 \Vert \nabla (v_0-\pi v_0) \Vert_{\Omega^*_0\setminus\Omega_0}
\le & \Vert \nabla (v_0-\mathcal{I}v_0)\Vert_{\Omega^*_0\setminus\Omega_0}+\Vert \nabla (\mathcal{I}v_0-\pi v_0)\Vert_{\Omega^*_0\setminus\Omega_0}\\
\le & CN^{-k}+C\varepsilon^{-1/2} N^{-(k+1)},\\
\Vert  v_0-\pi v_0  \Vert_{\Omega^*_0}
\le &
\Vert  v_0-\mathcal{I}v_0   \Vert_{\Omega^*_0}
+\Vert   \mathcal{I}v_0-\pi v_0  \Vert_{\Omega^*_0}\\
\le &   CN^{-(k+1)}+C\Vert   \mathcal{I}v_0-\pi v_0  \Vert_{\infty,\Omega^*_0}\le  CN^{-(k+1)},
\end{align*}
which could be derived from standard interpolation theories  and Lemma \ref{lem:main}.
Then  we obtain
\begin{equation}\label{eq:S-1}
\begin{aligned}
|S_1|\le &\varepsilon^{1/2}\Vert \nabla (v_0-\pi v_0)\Vert_{\Omega^*_0}+ \Vert  v_0-\pi v_0 \Vert_{\Omega^*_0}  
\le   C \varepsilon^{ 1/2} N^{-k} +C N^{-(k+1)}.
\end{aligned}
\end{equation}

Similar to \cite[Lemma 4]{Zhan1Liu2:2020-Convergence}, we have
$$
\Vert v_0-\mathcal{I}v_0 \Vert_{\infty,\Omega\setminus\Omega^*_0}
+\Vert w-\mathcal{I}w \Vert_{\infty,\Omega\setminus\Omega^*_0}
\le C N^{-(k+1)}.
$$
Imitating the proof of Lemma 5 in \cite{Zhan1Liu2:2020-Convergence} and replacing \cite[(3.15)]{Zhan1Liu2:2020-Convergence} by $|w_1|_{1,[0,1]\times [y_{N/4-1},1]}\le \varepsilon^{-1/2} N^{-(k+1)}$, we obtain
$$
\vert w_1-\mathcal{I}w_1 \vert_{1,\Omega\setminus\Omega^*_0}\le C \varepsilon^{-1/2}N^{-k},
$$
and similarly have
$$
\vert w-\mathcal{I}w \vert_{1,\Omega\setminus\Omega^*_0}\le C \varepsilon^{-1/2}N^{-k}.
$$
Thus we obtain
\begin{equation}\label{eq:S-2+4}
S_2+S_4\le C N^{-k}.
\end{equation}

Inverse inequalities, Lemmas \ref{lem:h-N-4-1} and \ref{lem:main} yield 
\begin{equation}
\begin{aligned}
 S_3  \le &\varepsilon^{1/2} \Vert \nabla \left( \mathcal{I}(\chi  (v_0-\pi v_0) ) \right) \Vert_{\Omega^{**}_0\setminus\Omega^*_0}+\Vert  \mathcal{I}(\chi   (v_0-\pi v_0) )  \Vert_{\Omega^{**}_0\setminus\Omega^*_0}\\
\le  & C (\varepsilon^{1/2}h^{-1}_{N/4-2} +1)\Vert  \mathcal{I}(\chi  (v_0-\pi v_0) ) \Vert_{\Omega^{**}_0\setminus\Omega^*_0} \\
\le & C (\varepsilon^{1/2}h^{-1}_{N/4-2} +1)h^{1/2}_{N/4-2}\Vert \mathcal{I}v_0-\pi v_0 \Vert_{  \infty, \partial \Omega^*_0   }
 \\
\le & C N^{-(k+1)}.
 \end{aligned}
\end{equation}
Similarly, one has
\begin{equation}\label{eq:S5}
S_5 \le C N^{-\sigma}. 
\end{equation}

Collecting \eqref{eq:S-1}--\eqref{eq:S5}, we are done.
\end{proof}

Similar to \cite[Theorem 2.6]{Fran1Roos2:2019-Error}, we obtain the following theorem from Lemmas \ref{lem:1} and \ref{lem:2}. 
\begin{theorem}\label{the:main-result}
Let $\sigma\ge k+1$. Let Assumptions \ref{assumption-regularity} and \ref{assumption-varepsilon-N} hold. Then for the exact solution $u$ to 
\eqref{eq:reaction diffusion equation} and the numerical solution $u^N$ to \eqref{eq:FEM} on  Bakhvalov-type rectangular mesh \eqref{eq:Bakhvalov mesh-Roos} or \eqref{eq:Bakhvalov mesh-Kopteva}, one has
$$
\Vert u-u^N \Vert_b\le C  N^{-k}.
$$
\end{theorem}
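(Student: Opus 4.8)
The plan is to estimate $\Vert u - u^N\Vert_b$ through the decomposition $u - u^N = \eta + \xi$ together with the triangle inequality
\begin{equation*}
\Vert u - u^N\Vert_b \le \Vert \eta\Vert_b + \Vert \xi\Vert_b.
\end{equation*}
The first term is already under control: Lemma~\ref{lem:2} gives $\Vert\eta\Vert_b \le CN^{-k}$. Hence the entire burden falls on showing $\Vert\xi\Vert_b \le CN^{-k}$, that is, on bounding the discrete error $\xi = P_cu - u^N$ in the balanced norm rather than merely in the energy norm.

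My first step would be to extract an energy-norm estimate for $\xi$ from the master inequality~\eqref{eq:whole idea}, which itself follows from the coercivity~\eqref{eq:coercivity} and the Galerkin orthogonality~\eqref{eq:orthogonality}. Inserting $\Vert\eta\Vert_b \le CN^{-k}$ from Lemma~\ref{lem:2} into the first term on the right of~\eqref{eq:whole idea}, and the bound $|(b\eta,\xi)| \le C\varepsilon^{1/2}N^{-(k+1)}\ln^{1/2}N\,\Vert\xi\Vert$ from Lemma~\ref{lem:1} into the second, then using $\Vert\xi\Vert \le \Vert\xi\Vert_\varepsilon$ together with $N^{-(k+1)}\ln^{1/2}N \le CN^{-k}$, the whole right-hand side collapses to $C\varepsilon^{1/2}N^{-k}\Vert\xi\Vert_\varepsilon$. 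Dividing through by $\Vert\xi\Vert_\varepsilon$ then yields
\begin{equation*}
\Vert\xi\Vert_\varepsilon \le C\varepsilon^{1/2}N^{-k}.
\end{equation*}

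The decisive step is to upgrade this energy-norm estimate to the balanced norm. I would read off the two components of $\Vert\xi\Vert_\varepsilon^2 = \varepsilon^2|\xi|_1^2 + \Vert\xi\Vert^2$ separately: the $L^2$ part gives $\Vert\xi\Vert \le \Vert\xi\Vert_\varepsilon \le C\varepsilon^{1/2}N^{-k} \le CN^{-k}$, while the gradient part gives $\varepsilon^2|\xi|_1^2 \le C\varepsilon N^{-2k}$, whence $\varepsilon^{1/2}|\xi|_1 \le CN^{-k}$. Combining these, $\Vert\xi\Vert_b^2 = \varepsilon|\xi|_1^2 + \Vert\xi\Vert^2 \le CN^{-2k}$, so $\Vert\xi\Vert_b \le CN^{-k}$, and the theorem follows.

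The point deserving the most care is precisely this last upgrade. Coercivity only controls the energy norm of $\xi$, and a balanced-norm conclusion cannot be obtained from a generic energy-norm bound of size $N^{-k}$: such a bound would force merely $\Vert\xi\Vert_b \le C\varepsilon^{-1/2}N^{-k}$, which is not uniform in $\varepsilon$. What rescues the argument is the extra factor $\varepsilon^{1/2}$ appearing on the right-hand side of~\eqref{eq:whole idea}. This factor is not accidental: it is produced by measuring the interpolation error in the balanced norm (the term $\varepsilon^{1/2}\Vert\eta\Vert_b$) and by the $\varepsilon^{1/2}$ already present in the estimate of $(b\eta,\xi)$ in Lemma~\ref{lem:1}. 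Both of these, in turn, rest on the tailored construction of $P_cu$ through the weighted $L^2$ projection on $\Omega_0^*$ and on the mesh properties collected in Lemma~\ref{lem:h-N-4-1}. Thus the genuine difficulty of the theorem is concentrated in Lemmas~\ref{lem:1} and~\ref{lem:2}; the argument assembled here is the mechanism that converts their $\varepsilon^{1/2}$-gain into a robust balanced-norm rate.
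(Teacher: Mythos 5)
Your proposal is correct and follows essentially the same route as the paper: the paper's proof of Theorem~\ref{the:main-result} is exactly the combination of the master inequality \eqref{eq:whole idea} with Lemmas~\ref{lem:1} and~\ref{lem:2} (stated there only by reference to Theorem~2.6 of the Franz--Roos paper), yielding $\Vert\xi\Vert_\varepsilon\le C\varepsilon^{1/2}N^{-k}$ and hence $\Vert\xi\Vert_b\le CN^{-k}$, followed by the triangle inequality with $\Vert\eta\Vert_b\le CN^{-k}$. Your write-up supplies the details the paper leaves implicit, and your observation that the $\varepsilon^{1/2}$ gain in \eqref{eq:whole idea} is what permits the energy-to-balanced-norm upgrade is precisely the mechanism the cited argument relies on.
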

\section{Supercloseness}\label{sec:supercloseness}
In order to derive the supercloseness result in the balanced norm, we need another novel interpolant, which will be described in the following.

Instead of Langrange interpolant operator used in the previous section, we introduce an vertices-edges-element interpolation operator $\mathcal{A}:\;C^0(\bar{\Omega})\to V^N$ (see \cite{Lin1Yan2Zho3:1991-rectangle,Styn1Tobi2:2008-Using}).  This interpolant is used for superconvergence analysis of the diffusion term. 
First we define the interpolant operator on the reference element $\hat{\tau}=(-1,1)^2$, whose vertices and edges are denoted by $\hat{a}_i$ and $\hat{e}_i$ respectively for $i=1,\ldots,4$.  Let $\hat{v}(\cdot,\cdot)\in C(\overline{\hat{\tau} })$.
 The operator $\hat{\mathcal{A}}:\; C(\overline{\hat{\tau} })\to \mathcal{Q}_k(\hat{\tau})$ is determined by $(k+1)^2$   continuous linear functionals $\hat{F}:\;C (\overline{\hat{\tau}})\to \mathbb{R}$,
 which are defined by 
\begin{align*}
&\hat{v}\to \hat{v}(\hat{a}_i)\quad \text{i=1,\ldots,4},\\
&\hat{v}\to \int_{\hat{e}_i}\hat{v}q\mr{d}s\quad\forall q\in\mathcal{P}_{k-2}(\hat{e}_i) \;\;\text{i=1,\ldots,4},\\
&\hat{v}\to \int_{\hat{\tau} }\hat{v}q\mr{d}x\mr{d}y\quad\forall q\in\mathcal{Q}_{k-2}(\hat{\tau}).
\end{align*}
 From \cite[Lemma 3]{Styn1Tobi2:2008-Using}, the operator $\hat{ \mathcal{A} }$ is uniquely determined.  Then using the affine  transformation
to map from $\hat{\tau}$ to an arbitrary $\tau\in \mathcal{T}^N$, one obtains the corresponding interpolation operator $\mathcal{A}_{\tau}:\;C(\bar{\tau})\to\mathcal{Q}_k(\tau)$. At last a continuous global interpolation operator $\mathcal{A} :\;C(\bar{\Omega})\to V^N$ is defined by setting
$$
(\mathcal{A} v)|_{\tau}:=\mathcal{A}_{\tau}(v|_{\tau})
\quad \forall \tau\in \mathcal{T}^N.
$$
Besides, we denote by $F$  degree   of freedom (DoF) of $V^N$, which originates  from the linear functional  $\hat{F}$.

Recall $i^*=j^*=N/4-2$ and   $\Omega_{w_1}:=[0,1]\times [y_0,y_{j^*+1}]$.
%
 The operator $S_1:\;C^0(\overline{\Omega})\to V^N$ is defined by
\begin{equation}\label{eq:projector-2-S}
S_1w_1 := 
\left\{
\begin{aligned}
&0\quad &&\text{ in  $\Omega\setminus \Omega_{w_1}$},\\
&\mathcal{A}w_1-\mathcal{B}_1w_1
 \quad&&\text{ in  $\Omega_{w_1}$},
\end{aligned}
\right.
\end{equation}
where 
$\mathcal{B}_1w_1$ satisfies
$$
F( \mathcal{B}_1w_1 )=
\left\{
\begin{aligned}
&F(w_1)\quad && \text{if $F$ is the   DoF of $V^N$ attached to
$[0,1]\times \{y=y_{j^*+1} \} $},\\
&0\quad && \text{otherwise}.
\end{aligned}
\right.
$$ 
If $F$ is the  DoF of $V^N$ attached to
$[0,1]\times \{y=y_{j^*+1} \} $, then it must be one of the following forms
\begin{align*}
& v\to v(x_i,y_{j^*+1})\quad \text{for $i=0,\ldots,N$},\\
&v\to \frac{j+1}{h_{i}^{j+1}}\int_{x_{i}}^{x_{i+1}}v(s,y_{j^*+1})(s-x_i)^j\mr{d}s \;\;\text{for $i=0,\ldots,N-1$ and $j=0,\ldots,k-2$}.
\end{align*}
In fact, $\mathcal{B}_1w_1$ is introduced for the continuity of $S_1w_1$ at $[0,1]\times \{ y=y_{j^*+1} \}$. Set $\Omega_{z_1}:=[x_0,x_{i^*+1}]\times [y_0,y_{j^*+1}]$. 
  The operator $T_1:\;C^0(\overline{\Omega})\to V^N$ is defined by
\begin{equation}\label{eq:projector-2-S}
T_1z_1:= 
\left\{
\begin{aligned}
&0\quad &&\text{ in  $\Omega\setminus \Omega_{z_1}$},\\
&\mathcal{A}z_1-\mathcal{C}_1z_1
\quad&&\text{ in  $ \Omega_{z_1}$},
\end{aligned}
\right.
\end{equation}
where the operator $\mathcal{C}_1$ is defined in a similar way to $\mathcal{B}_1$ except that the degrees of freedom for $\mathcal{C}_1$ are attached to $[0,x_{i^*+1}]\times \{ y=y_{j^*+1} \} \cup \{ x=x_{i^*+1} \} \times [0,y_{j^*+1}]$. 
  The operators $S_i$ and $T_i$ for $i=2,3,4$ could be defined similarly. Also we give a boundary correction  $\mathcal{C}(S_1 w_1)\in V^N$ for $S_1 w_1$, which is defined by
$$
F(\mathcal{C}(S_1 w_1) )=
\left\{
\begin{aligned}
&F(w_1)\quad && \text{if $F$ is the DoF of $V^N$ attached to
$\Gamma_{w_1}$},\\
&0\quad && \text{otherwise},
\end{aligned}
\right.
$$ 
where 
$ 
\Gamma_{w_1}:=\overline{ \partial\Omega\setminus \partial\Omega_{w_1}}.
$ 
With the help of this correction, we have
$$
\left(S_1w_1+\mathcal{C}(S_1w_1) \right)|_{\partial\Omega}= \mathcal{A}w_1|_{\partial\Omega}.
$$
By the same token, we could define the corrections $\mathcal{C}(S_iw_i)$ and $\mathcal{C}(T_iz_i)$ for $S_iw_i$ and $T_iz_i$ for $i=1,2,3,4$, respectively.

Introduce the discrete function $\mathcal{D}v_0\in V^N_0 $, which is defined by
$$
F(\mathcal{D}v_0 )=
\left\{
\begin{aligned}
&F(\pi v_0-v_0)\quad && \text{if $F$ is the DoF of $V^N$ attached to
$ \partial \Omega_0^*$},\\
&0\quad && \text{otherwise},
\end{aligned}
\right.
$$
and
\begin{equation}\label{eq:projector-2-S}
\mathcal{E}v_0 :=
\left\{
\begin{aligned}
&\pi v_0 \quad &&\text{ in  $\Omega_0^*$},\\
&
\mathcal{A}v_0+\mathcal{D}v_0
\quad&&\text{ in  $\Omega\setminus \Omega^{*}_0$.}
\end{aligned}
\right.
\end{equation}
Note that $(\mathcal{D}v_0)|_{\partial \Omega^{*}_0}=\left( \pi v_0-\mathcal{A}v_0 \right)|_{\partial \Omega^{*}_0}$. The definition of $\mathcal{D}v_0$ ensures the continuity  of $\mathcal{E}v_0$ on $\bar{\Omega}$.

Now we are in a position to propose the interpolation used for  our supercloseness analysis, which is defined by 
\begin{equation}\label{eq:projector-1-S}
P_su=\mathcal{E}v_0+\sum_{i=1}^4 S_iw_i+\sum_{i=1}^4 T_iz_i
+\mathcal{C}(w),
\end{equation}
where $\mathcal{C}(w)=\sum\limits_{i=1}^4 \mathcal{C}(S_iw_i)+\sum\limits_{i=1}^4 \mathcal{C}(T_iz_i)$. Clearly $P_su\in V^N_0$.

The following lemma could be found in \cite[Lemma 4]{Styn1Tobi2:2008-Using}, which is important for analysis of the diffusion part. 
\begin{lemma}\label{lem:superconvergence}
Let $\tau\in \mathcal{T}^N$. Let $v\in H^{k+2}(\tau)$ and 
$\mathcal{A}_{\tau}v\in\mathcal{Q}_{k}(\tau)$ be its vertices-edges-element interpolant. Then for each $v^N\in \mathcal{Q}_{k}(\tau)$ we have
$$
\left|\int_{\tau} (\mathcal{A}_{\tau}v-v)_x v^N_x\mr{d}x\mr{d}y \right|\le C h_{y,\tau}^{k+1}
\left\Vert \frac{\partial^{k+2}v }{\partial x\partial y^{k+1}} \right\Vert_{\tau} \Vert v^N_x \Vert_{\tau}
$$
and
$$
\left|\int_{\tau} (\mathcal{A}_{\tau}v-v)_y v^N_y\mr{d}x\mr{d}y \right|\le C h_{x,\tau}^{k+1}
\left\Vert \frac{\partial^{k+2}v }{\partial x^{k+1}\partial y} \right\Vert_{\tau} \Vert v^N_y \Vert_{\tau}.
$$
\end{lemma}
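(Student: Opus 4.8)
The plan is to reduce everything to the reference element $\hat\tau=(-1,1)^2$ by the affine map and to establish a single scaled estimate there; the two displayed inequalities are interchanged by the symmetry $x\leftrightarrow y$, so I would prove only the first one. Under the affine transformation $\tau\to\hat\tau$ given by $x=x_\tau+\tfrac{h_{x,\tau}}{2}(\hat x+1)$, $y=y_\tau+\tfrac{h_{y,\tau}}{2}(\hat y+1)$, set $\hat v(\hat x,\hat y)=v(x,y)$ and $\hat v^N=v^N$. Using $\partial_x=\tfrac{2}{h_{x,\tau}}\partial_{\hat x}$, $\mr{d}x\,\mr{d}y=\tfrac{h_{x,\tau}h_{y,\tau}}{4}\mr{d}\hat x\,\mr{d}\hat y$, and the affine equivalence of the interpolant, $(\mathcal{A}_\tau v)(x,y)=(\hat{\mathcal{A}}\hat v)(\hat x,\hat y)$, a direct computation shows that every power of $h_{x,\tau}$ cancels and exactly $h_{y,\tau}^{k+1}$ survives, so the claim is equivalent to the reference estimate
\begin{equation*}
\left|\int_{\hat\tau}(\hat{\mathcal{A}}\hat v-\hat v)_{\hat x}\,\hat w\,\mr{d}\hat x\,\mr{d}\hat y\right|
\le C\left\Vert \partial_{\hat x}\partial_{\hat y}^{\,k+1}\hat v\right\Vert_{\hat\tau}\Vert \hat w\Vert_{\hat\tau},
\qquad \hat w:=\hat v^N_{\hat x},
\end{equation*}
where $\hat w$ has degree $\le k-1$ in $\hat x$ and $\le k$ in $\hat y$ because $\hat v^N\in\mathcal{Q}_k(\hat\tau)$.

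Next I would view $B(\hat v):=\int_{\hat\tau}(\hat{\mathcal{A}}\hat v-\hat v)_{\hat x}\,\hat w\,\mr{d}\hat x\,\mr{d}\hat y$, for fixed $\hat w$, as a linear functional of $\hat v$ and prove two things about it. For boundedness, since the $(k+1)^2$ functionals defining $\hat{\mathcal{A}}$ (vertex values, edge moments against $\mathcal{P}_{k-2}$, element moments against $\mathcal{Q}_{k-2}$) are continuous on $C(\overline{\hat\tau})$ and hence on $H^{k+2}(\hat\tau)$ by Sobolev embedding, $\hat{\mathcal{A}}$ is a bounded operator into $\mathcal{Q}_k(\hat\tau)$; thus $|B(\hat v)|\le C\Vert\hat v\Vert_{k+2,\hat\tau}\Vert\hat w\Vert_{\hat\tau}$. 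For the annihilation property I would show that $B(\hat v)=0$ for every $\hat v$ in the kernel of $\partial_{\hat x}\partial_{\hat y}^{k+1}$, i.e. for $\hat v=g(\hat y)+\sum_{j=0}^{k}\hat y^{j}h_j(\hat x)$. If $\hat v$ is independent of $\hat x$, then by uniqueness of $\hat{\mathcal{A}}$ the interpolant $\hat{\mathcal{A}}\hat v$ is again independent of $\hat x$, so $(\hat{\mathcal{A}}\hat v-\hat v)_{\hat x}=0$ and $B(\hat v)=0$; the remaining monomials $\hat x^{i}\hat y^{j}$ with $j\le k$ are handled by integrating by parts in $\hat x$ and invoking the vertex, edge and element moment conditions satisfied by $\hat\eta:=\hat{\mathcal{A}}\hat v-\hat v$, together with the $\mathcal{Q}_k$-reproduction of $\hat{\mathcal{A}}$.

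Finally, combining boundedness with annihilation on $\ker(\partial_{\hat x}\partial_{\hat y}^{k+1})$, the anisotropic (tensor-product) Bramble--Hilbert lemma converts the full norm $\Vert\hat v\Vert_{k+2,\hat\tau}$ into the single mixed seminorm $\Vert\partial_{\hat x}\partial_{\hat y}^{k+1}\hat v\Vert_{\hat\tau}$, which yields the reference estimate; scaling back to $\tau$ then completes the proof. I expect the main obstacle to be precisely this annihilation step: the kernel $\ker(\partial_{\hat x}\partial_{\hat y}^{k+1})$ is \emph{infinite-dimensional} (it contains every function $g(\hat y)$), so the ordinary Bramble--Hilbert lemma, which quotients by a finite-dimensional polynomial space, does not apply directly. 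Making the functional vanish on this entire kernel is exactly what forces a careful exploitation of the edge moments against $\mathcal{P}_{k-2}$ and the element moments against $\mathcal{Q}_{k-2}$, and it is also what dictates that the sharp anisotropic form of Bramble--Hilbert, rather than the isotropic one, must be invoked to recover the single mixed derivative $\partial_{\hat x}\partial_{\hat y}^{k+1}$.
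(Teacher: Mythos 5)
Your reduction to the reference element is correct (the factor $h_{y,\tau}^{k+1}$ survives the scaling exactly as you say), and your annihilation observations are sound: $\hat{\mathcal{A}}$ maps functions of $\hat y$ alone to functions of $\hat y$ alone, and $B(\hat x^i\hat y^j)=0$ for $j\le k$ can indeed be verified, most cleanly through the tensor-product factorization $\hat{\mathcal{A}}=\hat{\mathcal{A}}_{\hat x}\otimes\hat{\mathcal{A}}_{\hat y}$ and the one-dimensional identity $\int_{-1}^{1}\bigl((I-\hat{\mathcal{A}}_{\hat x})h\bigr)'p\,\mr{d}\hat x=0$ for all $p\in\mathcal{P}_{k-1}$ (integrate by parts; use the endpoint conditions and the moments against $\mathcal{P}_{k-2}$). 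The genuine gap is your concluding step. The ``anisotropic Bramble--Hilbert lemma'' you invoke --- a functional bounded in the \emph{full} $H^{k+2}(\hat\tau)$ norm that vanishes on $\ker(\partial_{\hat x}\partial_{\hat y}^{k+1})$ is bounded by the single mixed seminorm --- is false as a general principle, so there is nothing to cite. Such a lemma would force the quotient norm $\inf\{\Vert\hat v-\hat u\Vert_{H^{k+2}(\hat\tau)}:\hat u\in\ker(\partial_{\hat x}\partial_{\hat y}^{k+1})\}$ to be dominated by $\Vert\partial_{\hat x}\partial_{\hat y}^{k+1}\hat v\Vert_{\hat\tau}$, and it is not: for $\hat v_n=n^{-1}\sin(n\hat x)\,\hat y^{k+1}$ one has $\Vert\partial_{\hat x}\partial_{\hat y}^{k+1}\hat v_n\Vert_{\hat\tau}=\mathcal{O}(1)$, while $\partial_{\hat x}^2\hat v_n=-n\sin(n\hat x)\,\hat y^{k+1}$; since every kernel element has the form $g(\hat y)+\sum_{j=0}^{k}\hat y^jh_j(\hat x)$, whose $\partial_{\hat x}^2$ lies for each fixed $\hat x$ in $\mathrm{span}\{\hat y^j:\,j\le k\}$, the $L^2$ distance of $\partial_{\hat x}^2\hat v_n$ to all such functions grows like $n$. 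Hence the quotient norm blows up while the mixed seminorm stays bounded, and (by a uniform-boundedness/Hahn--Banach argument) there exist bounded functionals vanishing on the kernel that violate the implication you use. Boundedness in $H^{k+2}$ plus annihilation is simply not enough information; the structure of $\hat{\mathcal{A}}$ must enter quantitatively, not only through which functions it reproduces.

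The standard repair is short, and it is essentially the argument behind the result you are re-proving (the paper itself offers no proof: it cites Lemma 4 of Stynes and Tobiska, which rests on the Lin--Yan--Zhou rectangle identities). Split the error via the factorization
\begin{equation*}
\hat v-\hat{\mathcal{A}}\hat v=\bigl[(I-\hat{\mathcal{A}}_{\hat x})\otimes\hat{\mathcal{A}}_{\hat y}\bigr]\hat v+\bigl[I\otimes(I-\hat{\mathcal{A}}_{\hat y})\bigr]\hat v.
\end{equation*}
The first term contributes nothing to $B$: for each fixed $\hat y$ its $\hat x$-derivative is orthogonal to $\hat w(\cdot,\hat y)\in\mathcal{P}_{k-1}$ by the one-dimensional identity above. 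The second term commutes with $\partial_{\hat x}$, so its contribution is $\int_{\hat\tau}\bigl[(I-\hat{\mathcal{A}}_{\hat y})\partial_{\hat x}\hat v\bigr]\hat w\,\mr{d}\hat x\,\mr{d}\hat y$, and now the \emph{ordinary} one-dimensional Bramble--Hilbert lemma in the variable $\hat y$ alone (finite-dimensional kernel $\mathcal{P}_k(\hat y)$), applied for a.e.\ $\hat x$ and integrated in $\hat x$, gives $\Vert(I-\hat{\mathcal{A}}_{\hat y})\partial_{\hat x}\hat v\Vert_{\hat\tau}\le C\Vert\partial_{\hat x}\partial_{\hat y}^{k+1}\hat v\Vert_{\hat\tau}$; Cauchy--Schwarz finishes. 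This is where the single mixed derivative really comes from: a two-dimensional Bramble--Hilbert argument over the infinite-dimensional kernel is never needed, and in the form you stated it is unavailable.
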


For the vertices-edges-element interpolation, we have the following interpolation errors.
\begin{lemma}\label{lem:VEE-interpolation-error}
Let $\tau\in \mathcal{T}^N$. Let $v\in H^{k+1}(\Omega)$. Then there exists a constant $C$  such that the vertices–edges–element interpolation $\mathcal{A}v$ satisfies
$$
\Vert v- \mathcal{A}v \Vert_{\tau}\le
C\sum_{i+j=k+1}h_{x,\tau}^i h_{y,\tau}^j
\left\Vert \frac{\partial^{k+1} v }
{\partial x^i \partial y^j} \right\Vert_{\tau}. 
$$
\end{lemma}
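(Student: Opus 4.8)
The plan is to prove the estimate by the classical reference-element-plus-scaling argument, exploiting that the diagonal (tensor-product) affine map distributes the two mesh sizes $h_{x,\tau}$ and $h_{y,\tau}$ onto the individual mixed derivatives, which is exactly what produces the anisotropic right-hand side.

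First I would work on the reference element $\hat{\tau}=(-1,1)^2$ and establish
$$
\Vert \hat v-\hat{\mathcal{A}}\hat v\Vert_{\hat\tau}\le C\sum_{i+j=k+1}\left\Vert \frac{\partial^{k+1}\hat v}{\partial\hat x^i\partial\hat y^j}\right\Vert_{\hat\tau}.
$$
Two ingredients enter here. First, $\hat{\mathcal{A}}$ is bounded as a map $H^{k+1}(\hat\tau)\to\mathcal{Q}_k(\hat\tau)$: the point-value functionals are bounded on $H^{k+1}(\hat\tau)$ through the Sobolev embedding $H^{k+1}(\hat\tau)\hookrightarrow C^0(\overline{\hat\tau})$ (valid since $k\ge 1$), the edge-moment functionals through the trace theorem, and the element moments trivially; all norms on the finite-dimensional space $\mathcal{Q}_k(\hat\tau)$ being equivalent, this yields $\Vert\hat{\mathcal{A}}\hat v\Vert_{\hat\tau}\le C\Vert\hat v\Vert_{H^{k+1}(\hat\tau)}$. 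Second, by the unisolvence from \cite[Lemma 3]{Styn1Tobi2:2008-Using}, $\hat{\mathcal{A}}$ reproduces $\mathcal{Q}_k(\hat\tau)$, hence in particular the complete space $\mathbb{P}_k(\hat\tau)$ of polynomials of total degree at most $k$. Writing $\hat v-\hat{\mathcal{A}}\hat v=(\hat v-\hat p)-\hat{\mathcal{A}}(\hat v-\hat p)$ for arbitrary $\hat p\in\mathbb{P}_k$, taking the infimum over $\hat p$, and invoking the Bramble--Hilbert (Deny--Lions) lemma bounds the error by the full seminorm $|\hat v|_{H^{k+1}(\hat\tau)}$, which is precisely the sum of all mixed-derivative norms on the right-hand side above.

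Next I would transfer this to an arbitrary $\tau\in\mathcal{T}^N$ via the affine map $\mathcal{F}:\hat\tau\to\tau$ given by $x=x_\tau+(h_{x,\tau}/2)(\hat x+1)$ and $y=y_\tau+(h_{y,\tau}/2)(\hat y+1)$, setting $\hat v=v\circ\mathcal{F}$. The construction of $\mathcal{A}_\tau$ as the pullback of $\hat{\mathcal{A}}$ supplies the commutativity $(\mathcal{A}_\tau v)\circ\mathcal{F}=\hat{\mathcal{A}}\hat v$. Since the Jacobian equals $h_{x,\tau}h_{y,\tau}/4$, the norms $\Vert v-\mathcal{A}_\tau v\Vert_\tau$ and $\Vert\hat v-\hat{\mathcal{A}}\hat v\Vert_{\hat\tau}$ differ only by the factor $(h_{x,\tau}h_{y,\tau}/4)^{1/2}$, while the chain rule gives $\partial_{\hat x}^i\partial_{\hat y}^j\hat v=(h_{x,\tau}/2)^i(h_{y,\tau}/2)^j(\partial_x^i\partial_y^j v)\circ\mathcal{F}$. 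Substituting these scalings into the reference estimate, the Jacobian factors cancel and the weights $(h_{x,\tau}/2)^i(h_{y,\tau}/2)^j$ survive term by term, delivering $C\sum_{i+j=k+1}h_{x,\tau}^i h_{y,\tau}^j\Vert\partial_x^i\partial_y^j v\Vert_\tau$ after absorbing the fixed constants.

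The scaling bookkeeping is routine. The point deserving care is the anisotropy: it is essential that $\hat{\mathcal{A}}$ reproduces the complete $\mathbb{P}_k$, not merely a tensor-product subspace, since this is what lets Bramble--Hilbert return the full $H^{k+1}$ seminorm on $\hat\tau$; the diagonal scaling then automatically assigns to each mixed derivative $\partial_x^i\partial_y^j v$ its own weight $h_{x,\tau}^i h_{y,\tau}^j$. Equally, the transfer step relies on the commutativity of $\mathcal{A}_\tau$ with the affine pullback, which is guaranteed by the definition of $\mathcal{A}_\tau$ through $\hat{\mathcal{A}}$ together with the consistent normalization of the edge- and element-moment functionals; without it the argument would collapse. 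I expect this commutativity and normalization check to be the only genuine (though mild) obstacle, the remainder being standard.
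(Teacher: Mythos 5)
Your proposal is correct, but it is not the paper's argument: the paper does not prove this lemma at all, its ``proof'' being the single citation to \cite[Lemma 7]{Styn1Tobi2:2008-Using}, so you have supplied the self-contained argument that the authors delegate to the literature. Your route --- boundedness of $\hat{\mathcal{A}}$ on $H^{k+1}(\hat\tau)$ (point values via the embedding $H^{k+1}(\hat\tau)\hookrightarrow C^0(\overline{\hat\tau})$ for $k\ge 1$, edge moments via the trace theorem), reproduction of $\mathbb{P}_k\subset\mathcal{Q}_k$ by unisolvence, Deny--Lions/Bramble--Hilbert on $\hat\tau$, then the diagonal affine scaling --- is sound, and your bookkeeping is right: because the left-hand side carries no derivatives, the Jacobian factor $(h_{x,\tau}h_{y,\tau}/4)^{1/2}$ cancels exactly against the pullback of the $L^2$ norms on the right, so each mixed derivative $\partial_x^i\partial_y^j v$ automatically acquires its own weight $h_{x,\tau}^ih_{y,\tau}^j$. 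The one clarification worth making is in your closing paragraph: reproducing all of $\mathbb{P}_k$ is not a delicate extra property to verify --- it is immediate from $\mathbb{P}_k\subset\mathcal{Q}_k$ and unisolvence --- and the genuinely anisotropic difficulties you allude to do not occur here at all; they arise only for estimates of \emph{derivatives} of the interpolation error (as in Lemma \ref{lem:superconvergence}), where the scaling produces uncancelled aspect-ratio factors and naive Bramble--Hilbert fails, which is precisely why the paper cites \cite{Styn1Tobi2:2008-Using} for those results. For the present $L^2$ bound your elementary argument buys a proof independent of any anisotropic interpolation theory, at the (minor) cost of restating what is standard; the paper's citation buys brevity and a uniform source for both this lemma and the harder superconvergence identity.
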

\begin{proof}
See \cite[Lemma 7]{Styn1Tobi2:2008-Using}.
\end{proof}

Set 
$$
e(D)=\{\tau\in\mathcal{T}^N:\;\bar{\tau}\cap (\overline{ \Omega\setminus D })\ne \varnothing,\;
\tau \in  D \}.
$$
 The following $L^{\infty}$-stabilities will be used later.
\begin{lemma}\label{lem:B-stability}
Let $v\in C^0(\bar{\Omega})$. There exists a constant $C$ independent of $v$ such that
\begin{align*}
&\Vert   \mathcal{B}_1v  \Vert_{\infty,e(\Omega_{w_1})}\le  C\Vert v  \Vert_{\infty,e(\Omega_{w_1})},\\
&\Vert ( \mathcal{B}_1v )_x \Vert_{\infty,e(\Omega_{w_1})}\le  C \Vert v_x \Vert_{\infty,e(\Omega_{w_1})},\\
&\Vert  \mathcal{C}_1z_1 \Vert_{\infty,e(\Omega_{z_1})} 
\le C \Vert   z_1 \Vert_{\infty,e(\Omega_{z_1})}.
\end{align*}
\end{lemma}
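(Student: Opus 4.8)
The plan is to prove each of the three $L^{\infty}$-stability estimates by the same strategy: reduce to the reference element, where the interpolation-type operators are determined by finitely many continuous linear functionals, and then exploit the affine scaling together with equivalence of norms on the finite-dimensional space $\mathcal{Q}_k$. First I would recall that $\mathcal{B}_1$ is built from a selection of the degrees of freedom $F$ attached to the edge $[0,1]\times\{y=y_{j^*+1}\}$, namely the point evaluations $v(x_i,y_{j^*+1})$ and the weighted edge-moments $\frac{j+1}{h_i^{j+1}}\int_{x_i}^{x_{i+1}}v(s,y_{j^*+1})(s-x_i)^j\,\mathrm{d}s$. On a fixed element these functionals are all bounded in terms of $\Vert v\Vert_{\infty}$ on that element, and the normalizing factor $(j+1)/h_i^{j+1}$ is exactly the scaling that makes the edge-moment functionals $\mathcal{O}(1)$ in the $L^{\infty}$ norm rather than growing or decaying with $h_i$. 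This normalization is the whole point: it renders $\mathcal{B}_1$ scale-invariant under the affine map, so no negative powers of $h_i$ or $\varepsilon$ appear in the bound.

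Concretely, for the first estimate I would fix $\tau\in e(\Omega_{w_1})$, pull back to $\hat\tau$ via the affine transformation, and write $(\mathcal{B}_1 v)|_\tau$ as a finite linear combination of $\mathcal{Q}_k(\hat\tau)$ basis functions with coefficients given by the (pulled-back) functionals applied to $v$. Each such functional is bounded by $C\Vert v\Vert_{\infty,\tau}$ uniformly, because the point evaluations are trivially bounded and the normalized edge-moments satisfy $\bigl|\frac{j+1}{h_i^{j+1}}\int_{x_i}^{x_{i+1}}v(s)(s-x_i)^j\,\mathrm{d}s\bigr|\le \Vert v\Vert_{\infty,\tau}$ by direct estimation of the integral. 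Since $\mathcal{Q}_k(\hat\tau)$ is finite-dimensional, the $L^{\infty}$ norm of the resulting polynomial is controlled by the (finitely many) coefficients, and mapping back gives $\Vert\mathcal{B}_1 v\Vert_{\infty,\tau}\le C\Vert v\Vert_{\infty,\tau}$; taking the supremum over $\tau\in e(\Omega_{w_1})$ yields the claim. The third estimate for $\mathcal{C}_1$ is identical in structure, the only change being that the relevant degrees of freedom sit on the two-sided boundary $[0,x_{i^*+1}]\times\{y=y_{j^*+1}\}\cup\{x=x_{i^*+1}\}\times[0,y_{j^*+1}]$; the same reference-element argument applies verbatim.

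The genuinely new point is the second estimate, $\Vert(\mathcal{B}_1 v)_x\Vert_{\infty,e(\Omega_{w_1})}\le C\Vert v_x\Vert_{\infty,e(\Omega_{w_1})}$, which bounds the \emph{derivative} of $\mathcal{B}_1 v$ by the derivative of $v$. Here I would not differentiate the coefficient representation directly, since that would produce a factor $h_i^{-1}$ from the affine scaling of $\partial_x$. Instead the trick is that $\mathcal{B}_1$ reproduces constants along the edge: if $v$ is constant in $x$ on the edge, then so is $\mathcal{B}_1 v$, hence $(\mathcal{B}_1 v)_x=0$. Therefore, on each element I may replace $v$ by $v-\bar v$ where $\bar v$ is a suitable $x$-constant (e.g. the value at the left vertex, or an edge average), giving $(\mathcal{B}_1 v)_x=(\mathcal{B}_1(v-\bar v))_x$. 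Applying the inverse inequality on the finite-dimensional $\mathcal{Q}_k(\hat\tau)$ in the reference variable and then the scaling of $\partial_x$ converts the $h_i^{-1}$ from differentiation against the $h_i$ gained from $\Vert v-\bar v\Vert_{\infty,\tau}\le C\,h_i\,\Vert v_x\Vert_{\infty,\tau}$ (the latter by the fundamental theorem of calculus along the edge), so the two powers of $h_i$ cancel and the bound is uniform. This cancellation—$\mathcal{B}_1$ preserving $x$-constants combined with the normalized moment functionals—is the main obstacle, in the sense that it is the one step where a naive estimate fails and one must use the affine-invariant structure of the operator rather than crude norm bounds; everything else is bookkeeping on the reference element.
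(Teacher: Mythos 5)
Your proposal is correct and takes essentially the same route as the paper: the scale-invariant normalization of the edge moments gives the zeroth-order bounds for $\mathcal{B}_1$ and $\mathcal{C}_1$, and your key point that $\mathcal{B}_1$ reproduces functions constant in $x$ is precisely the paper's identity $\bigl(\varphi_{i}+\sum_{m=0}^{k-2}\varphi_{i,m}+\varphi_{i+1}\bigr)_x\equiv 0$, which the paper exploits by substituting $v(s,y_{j^*+1})=v(x,y_{j^*+1})+\int_x^s v_t(t,y_{j^*+1})\,\mathrm{d}t$ directly into the coefficients (so the $h_i^{-1}$ from the basis derivatives cancels the $h_i$ from the integral), whereas you subtract an $x$-constant and invoke an inverse inequality --- the same $h_i\cdot h_i^{-1}$ cancellation in slightly different packaging. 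One small imprecision: your intermediate bound $\Vert v-\bar v\Vert_{\infty,\tau}\le Ch_i\Vert v_x\Vert_{\infty,\tau}$ fails on the full element (it ignores the $y$-variation of $v$); you need it only for the trace on the edge $y=y_{j^*+1}$, which suffices because $\mathcal{B}_1 v$ depends solely on those edge values, as your own coefficient representation shows.
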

\begin{proof}
For $\tau_{i,j^*}$ with $i=0,\ldots,N-1$, one has
$$
\begin{aligned}
\mathcal{B}_1v|_{\tau_{i,j^*} }=&v(x_i,y_{j^*+1})\varphi_{i}(x,y)
+v(x_{i+1},y_{j^*+1})\varphi_{i+1}(x,y)\\
&+\sum_{m=0}^{k-2}\frac{m+1}{h_{i,x}^{m+1}}\int_{x_{i}}^{x_{i+1}}v(s,y_{j^*+1})(s-x_i)^m\mr{d}s\; \varphi_{i,m}(x,y). 
\end{aligned}
$$
The inequality $\Vert   \mathcal{B}_1v  \Vert_{\infty,\tau_{i,j^*}}\le  C\Vert v  \Vert_{\infty,\tau_{i,j^*}}$ follows. In a similar way, we could prove $\Vert  \mathcal{C}_1z_1 \Vert_{\infty,e(\Omega_{z_1})})
\le C \Vert   z_1 \Vert_{\infty,e(\Omega_{z_1})}$. 
Note
$$
v(s,y_{j^*+1})=v(x,y_{j^*+1})+
\int_x^s v_t(t,y_{j^*+1})\mr{d}t\quad \text{for $s,x\in [x_i,x_{i+1}]$},
$$ 
and  
$$
\left(\varphi_{i}(x,y)+\sum_{m=0}^{k-2}\varphi_{i,m}(x,y)+\varphi_{i+1}(x,y) \right)_x\equiv 0 \quad \text{for $(x,y)\in \tau_{i,j^*}$}.
$$ 
Then we have
$$
\begin{aligned}
(\mathcal{B}_1v)_x|_{\tau_{i,j^*} }=&
\int_x^{x_i} v_t(t,y_{j^*+1})\mr{d}t \; (\varphi_{i}(x,y))_x
+\int_x^{x_{i+1} } v_t(t,y_{j^*+1})\mr{d}t  \;(\varphi_{i+1}(x,y))_x\\
&+\sum_{m=0}^{k-2}\frac{m+1}{h_{i,x}^{m+1}}\int_{x_{i}}^{x_{i+1}} \left( \int_x^{s} v_t(t,y_{j^*+1})\mr{d}t \right) (s-x_i)^m\mr{d}s\; (\varphi_{i,m}(x,y))_x,
\end{aligned}
$$
and
$$
\Vert (\mathcal{B}_1v)_x \Vert_{\infty,\tau_{i,j^*}}
\le CM\Vert v_x  \Vert_{\infty,\tau_{i,j^*}} 
\le C \Vert v_x  \Vert_{\infty,\tau_{i,j^*}}
$$
where from scaling arguments one has
$$
M=h_{i}
\max\limits_{
\genfrac{}{}{0em}{}{l=i,i+1}
{m=0,\ldots,k-2}
}\{ \Vert (\varphi_{l}(x,y))_x \Vert_{\infty,\tau_{i,j^*}},  \Vert (\varphi_{i,m}(x,y))_x \Vert_{\infty,\tau_{i,j^*}} \}\le C.
$$
Consider $e(\Omega_{w_1})=\cup_{i=0}^{N-1}\tau_{i,j^*}$ and we are done.
\end{proof}

The error is split   as follows:
$$
u-u^N=(u-P_su)+(P_su-u^N)=:\tilde{\eta}+ \tilde{\xi}.
$$
From the coercivity \eqref{eq:coercivity}, the Galerkin orthogonality \eqref{eq:orthogonality} and \eqref{eq:projector-1-S},  one has
\begin{equation}\label{eq:idea-supercloseness}
\begin{aligned}
 C \Vert \tilde{\xi} \Vert^2_{\varepsilon}\le &  a(P_s u-u,\tilde{\xi}) 
= \varepsilon^{2}( \nabla \tilde{\eta}, \nabla  \tilde{\xi})+  ( b \tilde{\eta}, \tilde{\xi} )\\
=:&\varepsilon^{2}\mathscr{S}_1+\mathscr{S}_2,
\end{aligned}
\end{equation}
where
\begin{align*}
\mathscr{S}_1=&( \nabla (v_0-\mathcal{E}v_0), \nabla  \tilde{\xi}) +\sum_{i=1}^4( \nabla (w_i-S_iw_i), \nabla  \tilde{\xi})+  \sum_{i=1}^4( \nabla (z_i-T_iz_i), \nabla  \tilde{\xi}),\\ 
\mathscr{S}_2=&(b(v_0-\mathcal{E}v_0),\tilde{\xi})+\sum_{i=1}^4( b \left( (w_i-S_iw_i)+(z_i-T_iz_i) \right), \tilde{\xi} )+a( \mathcal{C}(w),\tilde{\xi}).
\end{align*}

The terms in the right-hand side of \eqref{eq:idea-supercloseness} will be analyzed in the following two lemmas.
\begin{lemma}\label{lem:diffusion}
Let Assumptions \ref{assumption-regularity} and \ref{assumption-varepsilon-N} hold. Let $\sigma\ge k+3/2$.  Then one has
$$
|\mathscr{S}_1|\le C\varepsilon^{-3/2}(\varepsilon^{1/2}N^{-k}+N^{-(k+1)}) \Vert \tilde{\xi} \Vert_{\varepsilon}.
$$
\end{lemma}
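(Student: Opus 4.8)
The plan is to bound each of the three sums in $\mathscr{S}_1$ separately. For every summand $(\nabla\rho,\nabla\tilde\xi)$ I would either split off the two directional products $\int\rho_x\tilde\xi_x$ and $\int\rho_y\tilde\xi_y$ and apply the superconvergence estimate of Lemma~\ref{lem:superconvergence} element-wise (whenever $\rho$ involves the vertices--edges--element interpolant $\mathcal{A}$), or fall back on the Cauchy--Schwarz bound $\Vert\nabla\rho\Vert\,|\tilde\xi|_1$ (for the projection and correction pieces). In all cases the final step is to write $|\tilde\xi|_1\le\varepsilon^{-1}\Vert\tilde\xi\Vert_\varepsilon$, so that after extracting one factor $\varepsilon^{-1}$ it suffices to bound each $\Vert\nabla\rho\Vert$-type quantity by $C(N^{-k}+\varepsilon^{-1/2}N^{-(k+1)})$, or to produce the element sums directly.

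I would start with the regular term $(\nabla(v_0-\mathcal{E}v_0),\nabla\tilde\xi)$. On $\Omega_0^*$ one has $v_0-\mathcal{E}v_0=v_0-\pi v_0$, and the gradient bounds $\Vert\nabla(v_0-\pi v_0)\Vert_{\Omega_0}\le CN^{-k}$ and $\Vert\nabla(v_0-\pi v_0)\Vert_{\Omega_0^*\setminus\Omega_0}\le C(N^{-k}+\varepsilon^{-1/2}N^{-(k+1)})$ already assembled in the proof of Lemma~\ref{lem:2}, combined with Cauchy--Schwarz and $|\tilde\xi|_1\le\varepsilon^{-1}\Vert\tilde\xi\Vert_\varepsilon$, reproduce exactly the two target terms $\varepsilon^{-1}N^{-k}$ and $\varepsilon^{-3/2}N^{-(k+1)}$. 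On $\Omega\setminus\Omega_0^*$ one has $v_0-\mathcal{E}v_0=(v_0-\mathcal{A}v_0)-\mathcal{D}v_0$: for the $\mathcal{A}v_0$ part I would apply Lemma~\ref{lem:superconvergence} together with the regular-part bound $|\partial_x^m\partial_y^n v_0|\le C(1+\varepsilon^{(k+1)-m-n})$ from \eqref{eq:(2.1b)} on the thin layer frame, while $\mathcal{D}v_0$, supported on the $\mathcal{O}(\varepsilon)$-wide strip $\Omega_0^{**}\setminus\Omega_0^*$ and of pointwise size $N^{-(k+1)}$ on $\partial\Omega_0^*$ by Lemma~\ref{lem:main}, is controlled by an inverse inequality.

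The heart of the argument is the boundary-layer sum; I would treat $w_1$ in detail and invoke symmetry for the others. Outside $\Omega_{w_1}$ one has $w_1-S_1w_1=w_1$, which is exponentially small there, so the work is on $\Omega_{w_1}$, where $w_1-S_1w_1=(w_1-\mathcal{A}w_1)+\mathcal{B}_1w_1$. The decisive point is how Lemma~\ref{lem:superconvergence} matches the anisotropy of $w_1$: in the $x$-product the expensive factor $|\partial_x\partial_y^{k+1}w_1|\le C\varepsilon^{-(k+1)}e^{-\beta y/\varepsilon}$ from \eqref{bound-w-1} is multiplied by the \emph{fine} spacing $h_{y,\tau}^{k+1}$, and the product collapses through the crucial mesh estimate \eqref{eq:layer-function-4} (with $\mu=k+1\le\sigma$) to $CN^{-(k+1)}\,\mathrm{meas}(\tau)^{1/2}$, giving a harmless $\varepsilon^{-1}N^{-(k+1)}$; in the $y$-product only one $y$-derivative appears, $|\partial_x^{k+1}\partial_y w_1|\le C\varepsilon^{-1}e^{-\beta y/\varepsilon}$, multiplied by the \emph{coarse} spacing $h_{x,\tau}^{k+1}\le CN^{-(k+1)}$, and using $\Vert e^{-\beta y/\varepsilon}\Vert_{\Omega_{w_1}}\le C\varepsilon^{1/2}$ this produces the dominant $\varepsilon^{-3/2}N^{-(k+1)}$. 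The correction $\mathcal{B}_1w_1$ lives only on the single element row $e(\Omega_{w_1})$, where $w_1$ is of size $e^{-\beta y_{j^*}/\varepsilon}\le CN^{-\sigma}$; Lemma~\ref{lem:B-stability}, an inverse inequality in $y$, and the $\mathcal{O}(\varepsilon)$ measure of that row bound its contribution by $C\varepsilon^{-3/2}N^{-\sigma}$.

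I expect the main obstacle to be precisely this anisotropic bookkeeping for the layers: one must feed \eqref{bound-w-1} into Lemma~\ref{lem:superconvergence} in the correct direction (expensive $y$-derivatives paired with the fine $y$-mesh, cheap ones with the coarse $x$-mesh) and then balance the resulting exponential sums against \eqref{eq:layer-function-4}, while simultaneously checking that the interface corrections $\mathcal{B}_i$, $\mathcal{C}_i$, $\mathcal{D}$ --- each concentrated on an $\mathcal{O}(\varepsilon)$-wide strip and therefore amplified by inverse inequalities --- do not exceed the budget. It is exactly these $N^{-\sigma}$ remnants that make the hypothesis $\sigma\ge k+3/2$ convenient, since it comfortably ensures $N^{-\sigma}\le N^{-(k+1)}$. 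The corner-layer terms $z_i-T_iz_i$ follow the same template, with the correction $\mathcal{C}_i$ handled via the third inequality of Lemma~\ref{lem:B-stability}, and are strictly smaller thanks to the additional factor $e^{-\beta x/\varepsilon}$ in \eqref{bound-z-1}, which supplies a second application of \eqref{eq:layer-function-4}.
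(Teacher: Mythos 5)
Your proposal is correct and follows essentially the same route as the paper's proof: the same splitting of $\mathscr{S}_1$ into the $\mathcal{A}$-interpolation parts (treated by Lemma~\ref{lem:superconvergence} with the anisotropic bounds \eqref{bound-w-1}, \eqref{bound-z-1} and the mesh estimate \eqref{eq:layer-function-4}), the corrections $\mathcal{B}_1$, $\mathcal{C}_1$, $\mathcal{D}v_0$ (treated by Lemma~\ref{lem:B-stability}, inverse inequalities and the $\mathcal{O}(\varepsilon)$-width of their supporting strips), the $v_0-\pi v_0$ part (via the bounds of Lemma~\ref{lem:main}), and the exponentially small tails outside $\Omega_{w_1}$, $\Omega_{z_1}$, finishing with $\vert\tilde\xi\vert_1\le\varepsilon^{-1}\Vert\tilde\xi\Vert_\varepsilon$. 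The only (harmless) deviation is bookkeeping: you apply \eqref{eq:layer-function-4} with $\mu=k+1$ and absorb the element measures into $\mathrm{meas}(\Omega_{w_1})^{1/2}$, whereas the paper takes $\mu=k+3/2$ (this is where it actually invokes $\sigma\ge k+3/2$) to reach the same per-element bound.
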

\begin{proof}
From Assumption  \ref{assumption-regularity}, we just analyze $(\nabla (v_0-\mathcal{E} v_0), \nabla \tilde{\xi})$, $( \nabla (w_1-S_1w_1),  \nabla \tilde{\xi})$  and $( (z_1-T_1z_1)_x,  \tilde{\xi}_x)$. The remaining terms can be treated in a similar manner. We split these  terms as follows:
\begin{equation}\label{eq:diffusion}
(\nabla (v_0-\mathcal{E} v_0), \nabla \tilde{\xi})+( \nabla (w_1-S_1w_1),  \nabla \tilde{\xi})+( (z_1-T_1z_1)_x,  \tilde{\xi}_x)=\sum_{i=1}^4\mathcal{S}_i
\end{equation}
where
\begin{align*}
\mathcal{S}_1
=& (\nabla (v_0-\mathcal{A} v_0), \nabla \tilde{\xi})_{\Omega\setminus\Omega^*_0}+ ( \nabla  (w_1-\mathcal{A}w_1),  \nabla \tilde{\xi})_{ \Omega_{w_1} }+( (z_1-\mathcal{A}z_1)_x,  \tilde{\xi}_x)_{ \Omega_{z_1} }, \\
\mathcal{S}_2
=&(( \mathcal{B}_1w_1)_x,\tilde{\xi}_x)_{ e(\Omega_{w_1})  },\\
\mathcal{S}_3
=&
 (( \mathcal{B}_1w_1)_y,\tilde{\xi}_y)_{ e(\Omega_{w_1})  }+(( \mathcal{C}_1z_1 )_x,\tilde{\xi}_x)_{ e(\Omega_{z_1})  }, \\
\mathcal{S}_4
=& 
( \nabla  w_1 ,  \nabla \tilde{\xi})_{  \Omega\setminus\Omega_{w_1} }+( (z_1)_x,  \tilde{\xi}_x)_{  \Omega\setminus\Omega_{z_1} },\\
\mathcal{S}_5
=& (\nabla (v_0-\pi v_0), \nabla \tilde{\xi})_{\Omega^*_0\setminus \Omega_0}+ (\nabla (v_0-\pi v_0), \nabla \tilde{\xi})_{ \Omega_0},\\
\mathcal{S}_6
=&(\nabla ( \mathcal{D}v_0  ), \nabla \tilde{\xi})_{\Omega\setminus\Omega^*_0}. 
\end{align*}

Applying Lemmas \ref{lem:h-N-4-1} and \ref{lem:superconvergence}  to  $\mathcal{S}_1$, then we  have
\begin{equation}\label{eq:diff-1}
\begin{aligned}
|( (w_1-\mathcal{A}w_1)_x,  \tilde{\xi}_x)_{  \Omega_{w_1} }|\le & C \sum_{i=0}^{N-1}\sum_{j=0}^{j^*} h_{j,y}^{k+1} \left\Vert  \frac{\partial^{k+2} w_1 }{\partial x \partial^{k+1} y}   \right\Vert_{\tau_{ij}} \Vert \tilde{\xi}_x \Vert_{\tau_{ij}}\\
\le & C \sum_{i=0}^{N-1}\sum_{j=0}^{j^*} h_{j,y}^{k+1} \varepsilon^{-(k+1)}e^{-\beta y_j/\varepsilon} h_{i,x}^{1/2}h_{j,y}^{1/2}  \Vert \tilde{\xi}_x \Vert_{\tau_{ij}}\\
\le &C  \varepsilon^{ 1/2} \sum_{i=0}^{N-1}\sum_{j=0}^{j^*} N^{-(k+2)} \Vert \tilde{\xi}_x \Vert_{\tau_{ij}}\\
\le & C \varepsilon^{ 1/2} N^{-(k+1)}  \Vert \tilde{\xi}_x \Vert 
\end{aligned}
\end{equation}
and similarly  obtain
\begin{equation}
\begin{aligned}
&|(\nabla (v_0-\mathcal{A} v_0), \nabla \tilde{\xi})_{\Omega\setminus\Omega^*_0}|+|( (w_1-\mathcal{A}w_1)_y,  \tilde{\xi}_y)_{ \Omega_{w_1} }|+|( (z_1-\mathcal{A}z_1)_x,  \tilde{\xi}_x)_{ \Omega_{z_1} }|
\le C\varepsilon^{-1/2} N^{-(k+1)}  \Vert \nabla \tilde{\xi}  \Vert.
\end{aligned}
\end{equation}

From Lemma \ref{lem:B-stability}, one has
\begin{equation}\label{eq:diff-2}
\begin{aligned}
|\mathcal{S}_2|\le &\Vert (\mathcal{B}_1w_1)_x \Vert_{ e(\Omega_{w_1}) } \Vert \nabla \tilde{\xi}  \Vert\\
\le &|e(\Omega_{w_1})|^{1/2}\Vert (\mathcal{B}_1w_1)_x \Vert_{\infty, e(\Omega_{w_1}) } \Vert \nabla \tilde{\xi}  \Vert\\
\le & C \varepsilon^{1/2} \Vert ( w_1)_x \Vert_{\infty, e(\Omega_{w_1}) } \Vert \nabla \tilde{\xi}  \Vert\\
\le & C \varepsilon^{1/2} N^{-\sigma} \Vert \nabla \tilde{\xi}  \Vert.
\end{aligned}
\end{equation}

Inverse inequalities yield
\begin{equation}\label{eq:diff-3}
\begin{aligned}
|\mathcal{S}_3| \le &(\Vert (\mathcal{B}_1w_1  )_y \Vert_{ e(\Omega_{w_1}) }+\Vert ( \mathcal{C}_1z_1 )_x \Vert_{e(\Omega_{z_1})}) \Vert \nabla \tilde{\xi}  \Vert\\
\le & C (h_{j^*,y}^{-1}\Vert  \mathcal{B}_1w_1   \Vert_{ e(\Omega_{w_1}) }+h_{i^*,x}^{-1}\Vert  \mathcal{C}_1z_1   \Vert_{e(\Omega_{z_1})}) \Vert \nabla \tilde{\xi}  \Vert\\
\le & C \varepsilon^{-1}( |e(\Omega_{w_1})|^{1/2} \Vert  \mathcal{B}_1w_1    \Vert_{ \infty,e(\Omega_{w_1}) }+ |e(\Omega_{z_1})|^{1/2}\Vert  \mathcal{C}_1z_1 \Vert_{\infty,e(\Omega_{z_1})}) \Vert \nabla \tilde{\xi}  \Vert\\
\le & C \varepsilon^{-1/2}(  \Vert   w_1   \Vert_{\infty, e(\Omega_{w_1}) }+  \Vert   z_1    \Vert_{\infty,e(\Omega_{z_1})}) \Vert \nabla \tilde{\xi}  \Vert\\
\le & C \varepsilon^{-1/2} N^{-\sigma} \Vert \nabla \tilde{\xi}  \Vert.
\end{aligned}
\end{equation}

Assumption \ref{assumption-regularity} and direct calculations yield
\begin{equation}\label{eq:diff-4}
\begin{aligned}
|\mathcal{S}_4|\le  (\Vert \nabla w_1 \Vert_{\Omega\setminus\Omega_{w_1}}+\Vert \nabla z_1 \Vert_{\Omega\setminus\Omega_{z_1}}) \Vert \nabla \tilde{\xi}  \Vert 
\le   C\varepsilon^{-1/2}N^{-\sigma} \Vert \nabla \tilde{\xi}  \Vert.
\end{aligned}
\end{equation}

 H\"{o}lder inequalities, inverse inequalities, Lemmas \ref{lem:h-N-4-1} and \ref{lem:main}  yield
\begin{equation}\label{eq:diff-5}
\begin{aligned}
|\mathcal{S}_5|\le & C    h^{-1}_{N/4-1} \Vert v_0-\pi v_0 \Vert_{\infty,\Omega^*_0\setminus \Omega_0 } \Vert \nabla \tilde{\xi} \Vert_{1,\Omega^*_0\setminus \Omega_0 }  
+C    N \Vert v_0-\pi v_0 \Vert_{ \infty, \Omega_0 } \Vert \nabla \tilde{\xi} \Vert_{1,\Omega_0}\\
\le & C   h^{-1}_{N/4-1}  N^{-(k+1)} h^{1/2}_{N/4-1}  \Vert \nabla \tilde{\xi} \Vert_{ \Omega^*_0\setminus \Omega_0 }  
+C    N  N^{-(k+1)}  \Vert \nabla \tilde{\xi} \Vert_{ \Omega_0 }\\
\le &
C \varepsilon^{-3/2}( N^{-(k+1)} + \varepsilon ^{1/2} N^{-k} )  \Vert  \tilde{\xi} \Vert_{\varepsilon}.
\end{aligned}
\end{equation}

From the definition of $\mathcal{D}v_0$, we have
\begin{equation}\label{eq:diff-6}
\begin{aligned}
|\mathcal{S}_6|\le &  C  
\Vert \nabla (\mathcal{D}v_0) \Vert_{\infty,\partial\Omega^*_0} \Vert  \nabla \tilde{\xi}  \Vert_{1,e(\Omega\setminus\Omega^*_0)}\\
\le &
C   {\color{blue}h^{-1}_{N/4-2} \Vert  \mathcal{A}v_0-\pi v_0  \Vert_{\infty,\partial\Omega^*_0} \cdot h^{1/2}_{N/4-2}\Vert  \nabla \tilde{\xi}  \Vert_{ e(\Omega\setminus\Omega^*_0) } }\\
\le &
C \varepsilon^{-3/2} \Vert  \mathcal{A}v_0-\pi v_0  \Vert_{\infty, \Omega^*_0} \Vert   \tilde{\xi}  \Vert_{\varepsilon}\\
\le &
C \varepsilon^{-3/2}  N^{-(k+1)}\Vert   \tilde{\xi}  \Vert_{\varepsilon} .
\end{aligned}
\end{equation}
where Lemmas \ref{lem:main} and \ref{lem:VEE-interpolation-error} yield
\begin{align*}
\Vert  \mathcal{A}v_0-\pi v_0  \Vert_{\infty, \Omega^*_0}
\le 
\Vert  \mathcal{A}v_0-  v_0  \Vert_{\infty, \Omega^*_0}
+\Vert   v_0-\pi v_0  \Vert_{\infty, \Omega^*_0}
\le C N^{-(k+1)}.
\end{align*}

Substituting \eqref{eq:diff-1}--\eqref{eq:diff-6} into
\eqref{eq:diffusion} and considering $\sigma\ge k+3/2$, we are done. 
\end{proof}

\begin{lemma}\label{lem:reaction}
Let Assumptions \ref{assumption-regularity} and \ref{assumption-varepsilon-N} hold. Let $\sigma\ge k+3/2$.  Then one has
$$
|\mathscr{S}_2|\le C\varepsilon^{1/2}N^{-(k+1)} \ln^{1/2} N \Vert \tilde{\xi} \Vert_{\varepsilon}.
$$
\end{lemma}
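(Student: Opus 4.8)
The plan is to follow the architecture of the proof of Lemma~\ref{lem:1}, splitting $\mathscr{S}_2$ into a regular-part contribution $(b(v_0-\mathcal{E}v_0),\tilde{\xi})$, the boundary- and corner-layer contributions, and the boundary-correction contribution $a(\mathcal{C}(w),\tilde{\xi})$, and then to bound each piece against $\Vert\tilde{\xi}\Vert_{\varepsilon}$. The decisive first move is that, since $\tilde{\xi}=P_su-u^N\in V^N_0$, its restriction to $\Omega_0^*$ lies in $W^N$; hence the defining orthogonality of the weighted $L^2$ projection $\pi$ gives
\[
(b(v_0-\mathcal{E}v_0),\tilde{\xi})_{\Omega_0^*}=(b(v_0-\pi v_0),\tilde{\xi})_{\Omega_0^*}=0,
\]
so the regular part collapses to an integral over the thin layer strip $\Omega\setminus\Omega_0^*$.

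For that regular remainder I would use the definition of $\mathcal{E}v_0$ to write $v_0-\mathcal{E}v_0=(v_0-\mathcal{A}v_0)-\mathcal{D}v_0$ on $\Omega\setminus\Omega_0^*$. Standard interpolation theory gives $\Vert v_0-\mathcal{A}v_0\Vert_{\infty,\Omega\setminus\Omega_0^*}\le CN^{-(k+1)}$ since $v_0$ has $\varepsilon$-uniformly bounded derivatives, while $\mathcal{D}v_0$ is supported in the one-cell strip adjacent to $\partial\Omega_0^*$ and satisfies $\Vert\mathcal{D}v_0\Vert_{\infty}\le C\Vert\mathcal{A}v_0-\pi v_0\Vert_{\infty,\partial\Omega_0^*}\le CN^{-(k+1)}$, exactly the bound already used to derive~\eqref{eq:diff-6} through Lemmas~\ref{lem:main} and~\ref{lem:VEE-interpolation-error}. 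Because the layer width $x_{N/4-1}=\mathcal{O}(\varepsilon\ln N)$ by the Bakhvalov construction, one has $\mathrm{meas}(\Omega\setminus\Omega_0^*)^{1/2}\le C\varepsilon^{1/2}\ln^{1/2}N$, and a Cauchy--Schwarz estimate delivers the target order $C\varepsilon^{1/2}N^{-(k+1)}\ln^{1/2}N\,\Vert\tilde{\xi}\Vert$ for both pieces.

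For the layer part I would take $w_1$ and $z_1$ as representative. On $\Omega_{w_1}$ I split $w_1-S_1w_1=(w_1-\mathcal{A}w_1)+\mathcal{B}_1w_1$. For the interpolation error I combine Lemma~\ref{lem:VEE-interpolation-error} with the derivative bounds~\eqref{bound-w-1}, pairing each factor $h_{j,y}^{m}\varepsilon^{-m}e^{-\beta y_j/\varepsilon}$ with~\eqref{eq:layer-function-4} (exponent $\mu=m\le\sigma$) to obtain $h_{j,y}^{m}\varepsilon^{-m}e^{-\beta y_j/\varepsilon}\le CN^{-m}$ and $h_{i,x}^{l}\le CN^{-l}$, so that $\Vert w_1-\mathcal{A}w_1\Vert_{\infty,\tau_{ij}}\le CN^{-(k+1)}$ cell-by-cell; summing in $L^2$ against $\mathrm{meas}(\Omega_{w_1})^{1/2}\le C\varepsilon^{1/2}\ln^{1/2}N$ reproduces the claimed order. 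The correction $\mathcal{B}_1w_1$ lives on the single-cell strip $e(\Omega_{w_1})$, where $w_1=\mathcal{O}(N^{-\sigma})$ by~\eqref{eq:layer-function-4}; with the $L^{\infty}$-stability of Lemma~\ref{lem:B-stability} this renders $(b\,\mathcal{B}_1w_1,\tilde{\xi})$ of order $\varepsilon^{1/2}N^{-\sigma}$, and $w_1$ on $\Omega\setminus\Omega_{w_1}$ is likewise $\mathcal{O}(\varepsilon^{1/2}N^{-\sigma})$. The corner term $z_1$ runs identically using~\eqref{bound-z-1}, the two-directional decay giving $\Vert z_1-\mathcal{A}z_1\Vert_{\infty}\le CN^{-(k+1)}$ and the corner measure $\mathrm{meas}(\Omega_{z_1})^{1/2}\le C\varepsilon\ln N$, which is even smaller than required. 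Since $\sigma\ge k+3/2$, all $N^{-\sigma}$ remainders are dominated by $N^{-(k+1)}$.

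Finally, $a(\mathcal{C}(w),\tilde{\xi})$ is controlled by observing that each correction $\mathcal{C}(S_iw_i)$, $\mathcal{C}(T_iz_i)$ inherits its degrees of freedom from the layer functions on $\Gamma_{w_i}$, $\Gamma_{z_i}$, where these functions are exponentially small; bounding the reaction part directly and the diffusion part via an inverse inequality (absorbing the $\varepsilon^2$ together with a factor $\varepsilon\vert\tilde{\xi}\vert_1\le\Vert\tilde{\xi}\Vert_{\varepsilon}$) shows this contribution is $\mathcal{O}(N^{-\sigma})$, hence negligible. I expect the principal obstacle to be the bookkeeping in the layer part: one must carefully match each mesh-size power with the corresponding exponential factor so that~\eqref{eq:layer-function-4} applies with a legitimate exponent $\mu\le\sigma$, while simultaneously keeping the accumulated measure factor sharp at $\varepsilon^{1/2}\ln^{1/2}N$ rather than the cruder $\varepsilon^{1/2}\vert\ln\varepsilon\vert^{1/2}$ — which is precisely where the Bakhvalov property $x_{N/4-1}=\mathcal{O}(\varepsilon\ln N)$ of Lemma~\ref{lem:h-N-4-1} is indispensable.
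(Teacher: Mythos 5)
Your decomposition and most of your estimates coincide with the paper's own proof: the weighted-$L^2$-projection orthogonality annihilates $(b(v_0-\mathcal{E}v_0),\tilde{\xi})_{\Omega_0^*}$ (the paper uses this implicitly in \eqref{eq:C2}), the remainder $(v_0-\mathcal{A}v_0)-\mathcal{D}v_0$ on $\Omega\setminus\Omega_0^*$ is handled by $L^\infty$ bounds times the measure factor $\varepsilon^{1/2}\ln^{1/2}N$, the tails $w_1|_{\Omega\setminus\Omega_{w_1}}$, the corrections $\mathcal{B}_1w_1$ and $\mathcal{C}(w)$ all produce $N^{-\sigma}$-type terms exactly as in \eqref{eq:C3}--\eqref{eq:C4}. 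The one place where you genuinely deviate is also the one step that fails as written: your cell-wise bound $\Vert w_1-\mathcal{A}w_1\Vert_{\infty,\tau_{ij}}\le CN^{-(k+1)}$ cannot be deduced from Lemma~\ref{lem:VEE-interpolation-error}, which is an $L^2$-to-$L^2$ estimate; an anisotropic $L^\infty$ approximation result for the vertices-edges-element operator $\mathcal{A}$ is nowhere established in the paper (its only $L^\infty$-stability result, Lemma~\ref{lem:B-stability}, concerns $\mathcal{B}_1$ and $\mathcal{C}_1$, not $\mathcal{A}$). Such a bound is in fact true --- it follows from $L^\infty$-stability of $\hat{\mathcal{A}}$ on the reference cell together with an anisotropic Bramble--Hilbert argument --- but you would have to prove it as a separate lemma; invoking the $L^2$ lemma for it is not a proof.

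The paper instead stays in $L^2$: it applies Lemma~\ref{lem:VEE-interpolation-error} on each $\tau_{i,j}\subset\Omega_{w_1}$ and bounds $\Vert\partial_x^l\partial_y^m w_1\Vert_{\tau_{i,j}}\le C\varepsilon^{-m}e^{-\beta y_j/\varepsilon}(h_{i,x}h_{j,y})^{1/2}$, so that \eqref{eq:layer-function-4} must be used with the half-integer exponent $\mu=m+1/2$; since $m$ runs up to $k+1$, this is precisely what consumes the hypothesis $\sigma\ge k+3/2$ and yields $\Vert w_1-\mathcal{A}w_1\Vert_{\Omega_{w_1}}\le C\varepsilon^{1/2}N^{-(k+1)}$, after which Cauchy--Schwarz finishes the term. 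The fact that your route only ever needs $\mu=m\le k+1$ (i.e.\ $\sigma\ge k+1$) is a symptom that you are proving a stronger pointwise statement than the paper's lemmas support. So either add and prove the anisotropic $L^\infty$ estimate for $\mathcal{A}$, or replace your cell-wise $L^\infty$ argument for the layer terms by the paper's $L^2$ computation; with that repair, the rest of your proposal is sound and matches the paper's proof.
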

\begin{proof}
From \eqref{eq:idea-supercloseness}, we have 
\begin{equation}\label{eq:C1}
\mathscr{S}_2=( b (v_0-\mathcal{E}v_0),  \tilde{\xi})+\sum\limits_{i=1}^4( b (w_i-S_iw_i),  \tilde{\xi})+  \sum\limits_{i=1}^4(  b(z_i-T_iz_i),  \tilde{\xi})+a(\mathcal{C}(w),\tilde{\xi}).
\end{equation}

First, from \eqref{eq:projector-2-S}, H\"{o}lder inequalities and Lemma \ref{lem:h-N-4-1} we obtain
\begin{equation}\label{eq:C2}
\begin{aligned}
&|( b (v_0-\mathcal{E}v_0),  \tilde{\xi})|\le  |( b(v_0-\mathcal{A} v_0),   \tilde{\xi})_{\Omega\setminus\Omega^*_0}|+|(b(\mathcal{D}v_0 ),  \tilde{\xi})_{\Omega\setminus\Omega^*_0}|\\
&\le 
  C \Vert v_0-\mathcal{A}v_0 \Vert_{\infty,\Omega \setminus \Omega^*_0} \Vert \tilde{\xi} \Vert_{1,\Omega \setminus \Omega^*_0 }+C  \Vert  \mathcal{A}v_0-\pi v_0  \Vert_{\infty,\partial\Omega^*_0} \Vert   \tilde{\xi}  \Vert_{1, e(\Omega\setminus\Omega^*_0)}\\
&  \le 
  C \varepsilon^{1/2}  N^{-(k+1)} \ln^{1/2} N   \Vert  \tilde{\xi}
\Vert_{\Omega \setminus \Omega^*_0}
+
  C N^{-(k+1)} h_{N/4-2}^{1/2}  \Vert   \tilde{\xi}  \Vert_{ e(\Omega\setminus\Omega^*_0)  }.
\end{aligned}
\end{equation}

Second, H\"{o}lder inequalities and Lemma \ref{lem:h-N-4-1} yield
\begin{equation}\label{eq:C3}
\begin{aligned}
& a(\mathcal{C}(w),\tilde{\xi})\le \varepsilon^2 |(\nabla \mathcal{C}(w), \nabla \tilde{\xi} )|+ |(b\mathcal{C}(w), \tilde{\xi})|\\
\le & C \varepsilon^2  \Vert  \nabla \mathcal{C}(w) \Vert_{\infty,e(\Omega)} \Vert \nabla \tilde{\xi} \Vert_{1,e(\Omega) }+C\Vert   \mathcal{C}(w) \Vert_{\infty,e(\Omega)} \Vert\tilde{\xi}  \Vert_{1,e(\Omega)  }\\
\le & C \varepsilon^2  h^{-1}_0 N^{-\sigma}\cdot  h^{1/2}_0
 \Vert \nabla \tilde{\xi} \Vert_{ e(\Omega) }
 +C     N^{-\sigma}\cdot  h^{1/2}_0
 \Vert  \tilde{\xi} \Vert_{ e(\Omega) }\\
 \le & C\varepsilon^{1/2} N^{-(\sigma-1/2)}  \Vert   \tilde{\xi} \Vert_{\varepsilon}
 + C\varepsilon^{1/2} N^{-(\sigma+1/2)} \Vert   \tilde{\xi} \Vert_{\varepsilon},
\end{aligned}
\end{equation}
where we have  used   $\Vert   \mathcal{C}(w) \Vert_{\infty,e(\Omega)}\le C N^{-\sigma}$ and $C_6 \varepsilon N^{-1} \le h_0\le C_7 \varepsilon N^{-1}$.

 At last, we analyze $(b(w_1-S_1w_1),\tilde{\xi})$ and the remaining terms can be discussed in a similar way.  Lemmas  \ref{lem:VEE-interpolation-error} and \ref{lem:h-N-4-1} yield 
\begin{equation*}
\begin{aligned}
&\Vert w_1-\mathcal{A}w_1 \Vert_{\Omega_{w_1}}^2
=\sum_{i=0}^{N-1}\sum_{j=0}^{j^*} \Vert w_1-\mathcal{A}w_1 \Vert_{\tau_{i,j}}^2\\
\le & 
C  \sum_{i=0}^{N-1}\sum_{j=0}^{j^*} 
\left(\sum_{l+m=k+1} h_{i,x}^lh_{j,y}^m \left\Vert  \frac{ \partial^{k+1} w_1 }{\partial x^{l} \partial y^m } \right\Vert_{\tau_{i,j}} \right)^2\\
\le &
C \sum_{i=0}^{N-1}\sum_{j=0}^{j^*} 
\left(\sum_{l+m=k+1} h_{i,x}^lh_{j,y}^m \varepsilon^{-m}e^{-\beta y_j/\varepsilon}h_{i,x}^{1/2}h_{j,y}^{1/2}  \right)^2
\\
\le &
C\sum_{i=0}^{N-1}\sum_{j=0}^{j^*} 
\left(\sum_{l+m=k+1} N^{-(l+1/2)}\varepsilon^{m+1/2}N^{-(m+1/2)} \varepsilon^{-m}   \right)^2 \\
\le &
C \sum_{i=0}^{N-1}\sum_{j=0}^{j^*} (\varepsilon^{1/2}N^{-(k+2)})^2\le C \varepsilon N^{-2(k+1)}.
\end{aligned} 
\end{equation*}
  Lemma \ref{lem:B-stability} yields
\begin{align*}
\Vert  \mathcal{B}_1w_1  \Vert_{e(\Omega_{w_1})}
\le  
C |e(\Omega_{w_1})|^{1/2}\Vert w_1 \Vert_{\infty,e(\Omega_{w_1})}
\le 
Ch_{j^*}^{1/2} N^{-\sigma}.
\end{align*}
From the
Cauchy-Schwarz inequality one obtains
\begin{equation}\label{eq:C4}
\begin{aligned}
&(b(w_1-S_1w_1),\tilde{\xi})=(bw_1,\tilde{\xi})_{\Omega\setminus \Omega_{w_1} }+(b(w_1-\mathcal{A}w_1),\tilde{\xi})_{\Omega_{w_1}}
+(b\;\mathcal{B}_1w_1,\tilde{\xi})_{e(\Omega_{w_1})}\\
\le & C \Vert w_1 \Vert_{\Omega\setminus \Omega_{w_1} } \Vert \tilde{\xi} \Vert_{\Omega\setminus \Omega_{w_1} }
+ C \Vert w_1-\mathcal{A}w_1 \Vert_{\Omega_{w_1}} \Vert \tilde{\xi} \Vert_{\Omega_{w_1} }
+\Vert \mathcal{B}_1w_1 \Vert_{ e(\Omega_{w_1}) } \Vert \tilde{\xi} \Vert_{e(\Omega_{w_1})} \\
\le & C (\varepsilon^{1/2} N^{-\sigma}  
+  \varepsilon^{1/2} N^{-(k+1)}  +Ch_{j^*}^{1/2} N^{-\sigma}) \Vert \tilde{\xi} \Vert_{\varepsilon}\\
\le &C \varepsilon^{1/2} N^{-(k+1)}\Vert \tilde{\xi} \Vert_{\varepsilon}.
\end{aligned}
\end{equation}
Substituting \eqref{eq:C2}--\eqref{eq:C4} into \eqref{eq:C1}, we are done.
\end{proof}

Now we are in a position to present our supercloseness result in the balanced norm.
\begin{theorem}\label{the:supercloseness}
Let Assumptions \ref{assumption-regularity} and \ref{assumption-varepsilon-N} hold. Let $\sigma\ge k+3/2$. 
Let $P_su$ defined in \eqref{eq:projector-1-S} be the interpolation to the solution $u$ of \eqref{eq:reaction diffusion equation}. Let $u^N$ be the solution of \eqref{eq:weak formulation}.  Then one has
$$
 \Vert P_su-u^N \Vert_{b}\le   C ( \varepsilon^{1/2}N^{-k}+N^{-(k+1)}\ln^{1/2} N). 
$$

\end{theorem}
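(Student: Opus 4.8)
The plan is to feed the two estimates just established into the coercivity-plus-orthogonality inequality \eqref{eq:idea-supercloseness} and then convert the resulting energy-norm bound on $\tilde{\xi}=P_su-u^N$ into a balanced-norm bound. First I would recall from \eqref{eq:idea-supercloseness} that
$$
C\Vert \tilde{\xi} \Vert_{\varepsilon}^2 \le \varepsilon^2 \mathscr{S}_1 + \mathscr{S}_2 \le \varepsilon^2 |\mathscr{S}_1| + |\mathscr{S}_2|.
$$
Inserting the bound of Lemma \ref{lem:diffusion} gives $\varepsilon^2|\mathscr{S}_1| \le C\varepsilon^{1/2}(\varepsilon^{1/2}N^{-k}+N^{-(k+1)})\Vert \tilde{\xi}\Vert_{\varepsilon}$, since $\varepsilon^2\cdot\varepsilon^{-3/2}=\varepsilon^{1/2}$, while Lemma \ref{lem:reaction} gives $|\mathscr{S}_2| \le C\varepsilon^{1/2}N^{-(k+1)}\ln^{1/2}N\,\Vert \tilde{\xi}\Vert_{\varepsilon}$.

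Adding these and cancelling one factor of $\Vert \tilde{\xi}\Vert_{\varepsilon}$, and absorbing $N^{-(k+1)} \le N^{-(k+1)}\ln^{1/2}N$ (valid since $N \ge 8$), I would obtain the energy-norm supercloseness estimate
$$
\Vert \tilde{\xi}\Vert_{\varepsilon} \le C\varepsilon^{1/2}\bigl(\varepsilon^{1/2}N^{-k}+N^{-(k+1)}\ln^{1/2}N\bigr).
$$
To pass from $\Vert\cdot\Vert_{\varepsilon}$ to $\Vert\cdot\Vert_b$, I would use that, since $\varepsilon \le 1$, for every $v$
$$
\Vert v\Vert_b^2 = \varepsilon|v|_1^2 + \Vert v\Vert^2 = \varepsilon^{-1}\bigl(\varepsilon^2|v|_1^2\bigr) + \Vert v\Vert^2 \le \varepsilon^{-1}\Vert v\Vert_{\varepsilon}^2,
$$
so that $\Vert v\Vert_b \le \varepsilon^{-1/2}\Vert v\Vert_{\varepsilon}$. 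Applying this to $\tilde{\xi}=P_su-u^N$, the prefactor $\varepsilon^{-1/2}$ exactly cancels the $\varepsilon^{1/2}$ in the displayed energy-norm bound, yielding $\Vert P_su-u^N\Vert_b \le C(\varepsilon^{1/2}N^{-k}+N^{-(k+1)}\ln^{1/2}N)$, which is the claim.

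Thus the final step is routine arithmetic once Lemmas \ref{lem:diffusion} and \ref{lem:reaction} are in hand, so the real obstacle lies upstream: I expect the crux to be the $\varepsilon^{-3/2}$ scaling delivered by Lemma \ref{lem:diffusion}. That power is exactly sharp here, because after multiplication by $\varepsilon^2$ and by the $\varepsilon^{-1/2}$ from the norm conversion one is left with $\varepsilon^0$; any weaker control of $\mathscr{S}_1$ (for instance $\varepsilon^{-2}$) would leave a residual $\varepsilon^{-1/2}$ and destroy the $\varepsilon$-uniformity of the balanced-norm result. The delicate point in securing that scaling is the treatment of the terms $\mathcal{S}_5$ and $\mathcal{S}_6$ in \eqref{eq:diff-5}--\eqref{eq:diff-6}, where the gradient of $v_0-\pi v_0$ across the transition region and the boundary corrector $\mathcal{D}v_0$ must each be controlled through inverse inequalities together with the mesh bounds of Lemma \ref{lem:h-N-4-1}.
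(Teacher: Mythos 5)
Your proposal is correct and follows essentially the same route as the paper: both combine \eqref{eq:idea-supercloseness} with Lemmas \ref{lem:diffusion} and \ref{lem:reaction} to get $\Vert \tilde{\xi}\Vert_{\varepsilon}\le C\varepsilon^{1/2}(\varepsilon^{1/2}N^{-k}+N^{-(k+1)}\ln^{1/2}N)$, and then trade a factor $\varepsilon^{-1/2}$ to pass to the balanced norm. The only cosmetic difference is that the paper converts by bounding $\varepsilon^{1/2}\Vert\nabla\tilde{\xi}\Vert$ and $\Vert\tilde{\xi}\Vert$ separately, whereas you use the single inequality $\Vert v\Vert_b\le\varepsilon^{-1/2}\Vert v\Vert_{\varepsilon}$, which yields the same conclusion.
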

\begin{proof}
From \eqref{eq:idea-supercloseness}, Lemmas \ref{lem:diffusion} and \ref{lem:reaction}, we obtain
$$
\Vert \tilde{\xi} \Vert_{\varepsilon}\le
 C \varepsilon^{1/2}  (\varepsilon^{1/2}N^{-k}+N^{-(k+1)}\ln^{1/2} N),
$$
which implies the following estimations
\begin{align*}
&\varepsilon^{1/2} \Vert \nabla (P_su-u^N ) \Vert\le  C  ( \varepsilon^{1/2}N^{-k}+N^{-(k+1)}\ln^{1/2} N),\\
&\Vert P_su-u^N   \Vert\le  C \varepsilon^{1/2}  (\varepsilon^{1/2}N^{-k}+N^{-(k+1)}\ln^{1/2} N).
\end{align*}
Thus we are done.
\end{proof}
\begin{remark}
From Theorems \ref{the:main-result} and \ref{the:supercloseness}, we could conclude that 
Theorem \ref{the:supercloseness} presents a supercloseness result.   
It is the first supercloseness result  in the balanced norm in the literature. 

\end{remark}
 
%
%
%
\section{Numerical experiment}\label{sec:numerical}
In this section we present numerical experiments on Bakhvaolv-type rectangular meshes that support our theoretical results. All calculations were carried out by using Intel Visual Fortran 11 and  the discrete problems
were solved using GMRES (see, e.g., \cite{Ben1Gol2Lie3:2005-Numerical}).

We present errors and convergence orders in the computed solutions for the boundary value problem
\begin{equation}\label{eq:numerical-experiment}
\begin{aligned}
-\varepsilon^2\Delta u+2u&=f(x,y)\quad&&\text{in $\Omega=(0,1)^{2}$},\\
u&=0\quad&&\text{on $\partial\Omega$},
\end{aligned}
\end{equation}
where the right-hand side $f$ is chosen in such a way that
\begin{equation*}
u(x,y)=
\left(
1-\frac{e^{-x/\varepsilon}+e^{-(1-x)/\varepsilon}}{1+e^{-1/\varepsilon}}
\right)
\left(
1-\frac{e^{-y/\varepsilon}+e^{-(1-y)/\varepsilon}}{1+e^{-1/\varepsilon}}
\right)
\end{equation*}
is the exact solution.  For the computations we will assign values to the parameters  in \eqref{eq:Bakhvalov mesh-Roos} and  \eqref{eq:Bakhvalov mesh-Kopteva}.
 We set $\sigma=k+1$ when the error $\Vert u-u^N \Vert_b$ is discussed and $\sigma=k+3/2$ when the error $\Vert P_s u-u^N \Vert_b$  is discussed. Also, we set  $\beta=1$ and $C_1=4\sigma/(3\beta)$ in  \eqref{eq:Bakhvalov mesh-Kopteva}.

In our numerical tests, we will  consider $\varepsilon=10^{-3},10^{-4},\cdots,10^{-6}$, $k=1,2,3$ and $N=12,24,\cdots,768$. Our numerical experiments imply that 
meshes \eqref{eq:Bakhvalov mesh-Roos} and  \eqref{eq:Bakhvalov mesh-Kopteva} have same performances. Thus we only present numerical results for mesh \eqref{eq:Bakhvalov mesh-Roos}.

\par
For a fixed $\varepsilon$ and $N$, we have evaluated the error $e^{N,\varepsilon}_{c}=\Vert u-u^N \Vert_{b}$ and $e^{N,\varepsilon}_{s}=\Vert P_s u-u^N \Vert_{b}$,  where  $P_s u$ is the interpolation of the exact solution $u$  to \eqref{eq:numerical-experiment}, which is defined in \eqref{eq:projector-1-S} and $u^N$ represents its numerical approximation. In the following  we present the errors 
$$
e^N_c=\max\limits_{\varepsilon=10^{-3},\ldots,10^{-6}}e^{N,\varepsilon}_{c},\quad
e^N_s=\max\limits_{\varepsilon=10^{-3},\ldots,10^{-6}}e^{N,\varepsilon}_{s} 
$$
and the corresponding orders of convergence
$$
p^N_c=\frac{\ln e^N_c -\ln e^{2N}_c}{ \ln 2 },
\quad
p^N_s=\frac{\ln e^N_s -\ln e^{2N}_s}{ \ln 2 }.
$$



Numerical results are presented in Table \ref{table:alex-1} and log-log chart   \ref{fig:alex-1}.
Table \ref{table:alex-1} lists  errors in the balanced norm on Bakhvalov-type meshes \eqref{eq:Bakhvalov mesh-Roos} for $\varepsilon=10^{-3}, \ldots, 10^{-6}$ in the cases of $k=1$ and $k=2$.   Errors in the balanced norm  for the cases $k=3$ are plotted in Figure \ref{fig:alex-1}. The data in Table \ref{table:alex-1} and Figure \ref{fig:alex-1} show   optimal uniform convergence of $\Vert u-u^N \Vert_b$ with respect to the singular perturbation parameter $\varepsilon$, which implies that Theorem \ref{the:main-result} is sharp. The data also show that the superclosenss result is   of order $k+1$ for $k$th rectangular finite elements, which support Theorem \ref{the:supercloseness} in a sense
and also imply Theorem \ref{the:supercloseness} might not be sharp.

 {\color{red}
 \begin{figure}[htbp]
\centering
\includegraphics[width=5in]{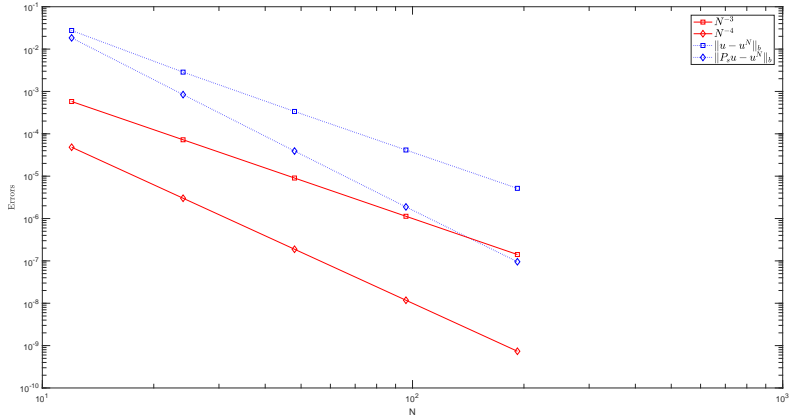}
\caption{Errors in the balanced norm when $k=3$.}
\label{fig:alex-1}
\end{figure}
}

\begin{table}[h]
\caption{Errors and rates  in the balanced norm for $k=1$ and $k=2$}
\footnotesize
\begin{tabular*}{\textwidth}{@{\extracolsep{\fill}} c| cc|cc|  cc|cc| }
\hline
     &$k=1$ & $k=1$  & $k=1$  &  $k=1$ &$k=2$ & $k=2$  & $k=2$  &  $k=2$     \\ 
\hline
 \diagbox{$N$}{ }   &$e^N_c$ &$p^N_c$  & $e^N_s$  &$p^N_s$  &$e^N_c$ &$p^N_c$  & $e^N_s$  &$p^N_s$    \\ 
\hline
             $12$       &  0.397E0  & 1.04     & 0.101E0   & 2.05   &  0.103E0  & 2.13     & 0.336E-1   &  3.43   \\
             $24$       &  0.193E0  & 1.00      & 0.245E-1   &  2.11 &  0.236E-1  & 2.03     & 0.312E-2   &  3.40  \\
             $48$       & 0.963E-1  & 1.00      & 0.566E-2   &  2.07   &  0.576E-2  & 2.01 & 0.295E-3   & 3.36  \\
             $96$       & 0.481E-1  & 1.00     &0.135E-2   & 2.04   &  0.143E-2  & 2.00     & 0.287E-4   & 3.29\\
             $192$       & 0.241E-1  & 1.01     & 0.328E-3   &  2.02 &  0.357E-3  & 2.00         & 0.295E-5   &  3.16 \\
             $384$       & 0.120E-1  & 1.00     & 0.808E-4   &  2.01  &  0.892E-4  & ---     & 0.328E-6   &  --- \\
             $768$       & 0.601E-2  & ---      & 0.201E-4   & ---   &  ---  & ---    & ---   & --- \\
\hline
\end{tabular*}
\label{table:alex-1}
\end{table}

  
%
%
\section*{References}
\bibliographystyle{plain}

\begin{thebibliography}{10}

\bibitem{Bakhvalov:1969-Towards}
N.~S. Bahvalov.
\newblock On the optimization of the methods for solving boundary value
  problems in the presence of a boundary layer.
\newblock {\em Zh. Vychisl. Mat. Mat. Fiz.}, 9:841--859, 1969.

\bibitem{Ben1Gol2Lie3:2005-Numerical}
M.~Benzi, G.~H. Golub, and J.~Liesen.
\newblock Numerical solution of saddle point problems.
\newblock {\em Acta Numer.}, 14:1--137, 2005.

\bibitem{Cla1Gra2Ori3:2005-parameter}
C.~Clavero, J.~L. Gracia, and E.~O'Riordan.
\newblock A parameter robust numerical method for a two dimensional
  reaction-diffusion problem.
\newblock {\em Math. Comp.}, 74(252):1743--1758, 2005.

\bibitem{Con1Fra2Lud3etc:2018-Finite}
P.~Constantinou, S.~Franz, L.~Ludwig, and C.~Xenophontos.
\newblock Finite element approximation of reaction-diffusion problems using an
  exponentially graded mesh.
\newblock {\em Comput. Math. Appl.}, 76(10):2523--2534, 2018.

\bibitem{Crou1Tome2:1987-stability}
M.~Crouzeix and V.~Thom\'{e}e.
\newblock The stability in {$L_p$} and {$W^1_p$} of the {$L_2$}-projection onto
  finite element function spaces.
\newblock {\em Math. Comp.}, 48(178):521--532, 1987.

\bibitem{Dur1Lom2Pri3:2013-Supercloseness}
R.~G. Dur\'an, A.~L. Lombardi, and M.~I. Prieto.
\newblock Supercloseness on graded meshes for {$Q_1$} finite element
  approximation of a reaction-diffusion equation.
\newblock {\em J. Comput. Appl. Math.}, 242:232--247, 2013.

\bibitem{Franz:2008-Singularly}
S.~Franz.
\newblock {\em Singularly perturbed problems with characteristic layers:
  Supercloseness and postprocessing}.
\newblock Ph{D} thesis, Department of Mathematics, TU Dresden, 2008.

\bibitem{Fran1Roos2:2019-Error}
S.~Franz and H.-G. Roos.
\newblock Error estimates in balanced norms of finite element methods for
  higher order reaction-diffusion problems, 2019.

\bibitem{Han1Kell2:1990-Differentiability}
H.~Han and R.~B. Kellogg.
\newblock Differentiability properties of solutions of the equation
  {$-\epsilon^2\Delta u+ru=f(x,y)$} in a square.
\newblock {\em SIAM J. Math. Anal.}, 21(2):394--408, 1990.

\bibitem{Kopteva:1999-the}
N.~Kopteva.
\newblock On the convergence, uniform with respect to the small parameter, of a
  scheme with central difference on refined grids.
\newblock {\em Zh. Vychisl. Mat. Mat. Fiz.}, 39(10):1662--1678, 1999.

\bibitem{Kopt1Save2:2011-Pointwise}
N.~Kopteva and S.~B. Savescu.
\newblock Pointwise error estimates for a singularly perturbed time-dependent
  semilinear reaction-diffusion problem.
\newblock {\em IMA J. Numer. Anal.}, 31(2):616--639, 2011.

\bibitem{Lin1Yan2Zho3:1991-rectangle}
Q.~Lin, N.~Yan, and A.~Zhou.
\newblock A rectangle test for interpolated finite elements.
\newblock In {\em Proceedings Systems Science and Systems Engineering (Hong
  Kong, 1991)}, pages 217--229. Great Wall Culture Publishing, Whittier, CA,
  1991.

\bibitem{Lin1Styn2:2012-balanced}
R.~Lin and M.~Stynes.
\newblock A balanced finite element method for singularly perturbed
  reaction-diffusion problems.
\newblock {\em SIAM J. Numer. Anal.}, 50(5):2729--2743, 2012.

\bibitem{Linb:2010-Layer}
T.~Lin\ss.
\newblock {\em Layer-adapted meshes for reaction-convection-diffusion
  problems}, volume 1985 of {\em Lecture Notes in Mathematics}.
\newblock Springer-Verlag, Berlin, 2010.

\bibitem{Liu1Mad2Sty3etc:2009-two-scale}
F.~Liu, N.~Madden, M.~Stynes, and A.~Zhou.
\newblock A two-scale sparse grid method for a singularly perturbed
  reaction-diffusion problem in two dimensions.
\newblock {\em IMA J. Numer. Anal.}, 29(4):986--1007, 2009.

\bibitem{Liu1Sty2Zha3:2018-Supercloseness-CIP-EL}
X.~Liu, M.~Stynes, and J.~Zhang.
\newblock Supercloseness of edge stabilization on {S}hishkin rectangular meshes
  for convection--diffusion problems with exponential layers.
\newblock {\em IMA J. Numer. Anal.}, 38(4):2105--2122, 2018.

\bibitem{Roos-2006-Error}
H.-G. Roos.
\newblock Error estimates for linear finite elements on {B}akhvalov-type
  meshes.
\newblock {\em Appl. Math.}, 51(1):63--72, 2006.

\bibitem{Roos1Scho2:2015-Convergence}
H.-G. Roos and M.~Schopf.
\newblock Convergence and stability in balanced norms of finite element methods
  on {S}hishkin meshes for reaction-diffusion problems.
\newblock {\em ZAMM Z. Angew. Math. Mech.}, 95(6):551--565, 2015.

\bibitem{Roo1Sty2Tob3:2008-Robust}
H.{-G}. Roos, M.~Stynes, and L.~Tobiska.
\newblock {\em Robust Numerical Methods for Singularly Perturbed Differential
  Equations}, volume~24 of {\em Springer Series in Computational Mathematics}.
\newblock Springer-Verlag, Berlin, second edition, 2008.

\bibitem{Stynes:2005-Steady}
M.~Stynes.
\newblock Steady-state convection-diffusion problems.
\newblock {\em Acta Numer.}, 14:445--508, 2005.

\bibitem{Styn1Tobi2:2008-Using}
M.~Stynes and L.~Tobiska.
\newblock Using rectangular {$Q_p$} elements in the {SDFEM} for a
  convection-diffusion problem with a boundary layer.
\newblock {\em Appl. Numer. Math.}, 58(12):1789--1802, 2008.

\bibitem{Zhan1Liu2:2016-Analysis-CL}
J.~Zhang and X.~Liu.
\newblock Analysis of {SDFEM} on {S}hishkin triangular meshes and hybrid meshes
  for problems with characteristic layers.
\newblock {\em J. Sci. Comput.}, 68(3):1299--1316, 2016.

\bibitem{Zhan1Liu2:2017-Supercloseness-EL}
J.~Zhang and X.~Liu.
\newblock Supercloseness of the {SDFEM} on {S}hishkin triangular meshes for
  problems with exponential layers.
\newblock {\em Adv. Comput. Math.}, 43(4):759--775, 2017.

\bibitem{Zhan1Liu2:2020-Convergence}
J.~Zhang and X.~Liu.
\newblock Convergence of a finite element method on a {B}akhvalov-type mesh for
  singularly perturbed reaction-diffusion equation.
\newblock {\em Appl. Math. Comput.}, 385:125403, 2020.

\bibitem{Zhan1Liu2:2020-Optimal}
J.~Zhang and X.~Liu.
\newblock Optimal order of uniform convergence for finite element method on
  {B}akhvalov-type meshes.
\newblock {\em J. Sci. Comput.}, 85(1):2, 2020.

\bibitem{Zhan1Liu2:2021-Supercloseness}
J.~Zhang and X.~Liu.
\newblock Supercloseness of linear finite element method on {B}akhvalov-type
  meshes for singularly perturbed convection-diffusion equation in 1{D}.
\newblock {\em Appl. Math. Lett.}, 111:106624, 2021.

\bibitem{Zhan1Styn2:2017-Supercloseness}
J.~Zhang and M.~Stynes.
\newblock Supercloseness of continuous interior penalty method for
  convection--diffusion problems with characteristic layers.
\newblock {\em Comput. Methods Appl. Mech. Engrg.}, 319:549--566, 2017.

\end{thebibliography}


%
%

\end{document}